\newlength{\defbaselineskip}
\newtheorem{theorem}{Theorem}[section]
\newtheorem{example}{Example}[section]
\newtheorem{lemma}{Lemma}[section]
\newtheorem{definition}[theorem]{Definition}
\newtheorem{remark}{Remark}[section]
\numberwithin{equation}{section}
\newtheorem{corollary}{Corollary}[section]
\begin{document}
\date{}
\title{
 Approximation of the Koopman operator via Bernstein polynomials
}
\maketitle
\vspace{-1cm}
\begin{center}
{\bf Rishikesh Yadav$^1$ and  Alexandre Mauroy$^2$}\\\vspace{.5cm}
{Namur Institute for Complex Systems (naXys) and Department of Mathematics,
University of Namur, 5000 Namur, Belgium}
\end{center}

\footnotetext[1]{rishikesh2506@gmail.com, rishikesh.yadav@unamur.be}\footnotetext[2]{alexandre.mauroy@unamur.be}

\begin{abstract}
The Koopman operator approach provides a powerful linear description of nonlinear dynamical systems in terms of the evolution of observables. While the operator is typically infinite-dimensional, it is crucial to develop finite-dimensional approximation methods and characterize the related approximation errors with upper bounds, preferably expressed in the uniform norm. In this paper, we depart from the traditional use of orthogonal projection or truncation, and propose a novel method based on Bernstein polynomial approximation. Considering a basis of Bernstein polynomials, we construct a matrix approximation of the Koopman operator in a computationally effective way. Building on results of approximation theory, we characterize the rates of convergence and the upper bounds of the error in various contexts including the cases of univariate and multivariate systems, and continuous and differentiable observables. The obtained bounds are expressed in the uniform norm in terms of the modulus of continuity of the observables. Finally, the method is extended to a data-driven setting through a proper change of coordinates. Numerical experiments show that the method is robust to noise and demonstrates good performance for trajectory prediction.
\end{abstract}

\textbf{Key words:} Approximation theory, Bernstein polynomials, extended dynamic mode decomposition, Koopman operator, nonlinear dynamics.

 \textbf{MSC 2010}: {37L65; 37M15; 41A36; 41A25; 47B33}.

\section{Introduction}



Dynamical systems theory is of paramount importance in data science, since it is often possible to assume that time series are produced by an underlying dynamics. In the context of data-driven analysis of dynamical systems, the Koopman operator framework plays a crucial role \cite{BMM, MI}. Unlike traditional pointwise representations describing the evolution of the system state variables \textsl{in the state space}, the Koopman operator provides a powerful global perspective via the evolution of functions defined \textsl{over the state space}, also called observables \cite{Koopman}. Through this perspective, the partial knowledge gained through the data can be leveraged to obtain global information on the underlying dynamical system. Over the past years, the Koopman operator approach has also percolated to the related field of nonlinear control theory, providing an innovative body of linear data-driven techniques for system analysis and control design, e.g., 
stability analysis \cite{MAMI},
state observation \cite{Surana}, systems identification \cite{Mauroy2}, model predictive control \cite{Korda}, optimal control \cite{Goswami, Kaiser}, feedback stabilization \cite{Huang}, to list a few. See also \cite{Mauroy, BSH} for an overview.

The Koopman operator is typically infinite-dimensional, but can be approximated in a finite-dimensional linear subspace through truncation or orthogonal projection (see e.g. Extended Dynamic Mode Decomposition (EDMD) \cite{Williams}). This process yields a linear finite-dimensional description of the system, which is however vitiated by approximation errors. These errors should be characterized, not only through convergence rates that assess the consistency of the approximation as the subspace dimension increases, but also through upper bounds that can be taken into account by robust methods (e.g. robust control). To our knowledge, a first theoretical convergence analysis of finite-dimensional approximations of the Koopman operator has been carried out in \cite{Korda2}, with a focus on the EDMD method. Similarly, the work \cite{Mauroy2} investigated the convergence properties of approximations of the unbounded Koopman infinitesimal generator in the context of parameter estimation, but neither convergence rates nor error bounds were provided. On the other hand, convergence rates were derived in \cite{Kurdila} for Koopman operator approximations in the case of general systems and approximation spaces. Additionally, a convergence analysis was conducted in the specific case of interpolation projections in reproducing kernel Hilbert spaces (e.g. Sobolev spaces) \cite{Paruchuri, PBBK} and finite-element based approximations in $L^2$ spaces \cite{Zhanga}. Moreover, \cite{Peitz} derived probabilistic finite-data error bounds that were later used for feedback design \cite{SSWBA}, while \cite{Philipp2024} provided approximation error bounds for kernel-based EDMD. As shown by this body of work, there is a need for a theoretical examination of approximation error bounds. However, such bounds are often derived as a by-product of convergence analyses and cannot be explicitly computed. Also, they should be relevant to trajectory-oriented applications, and therefore expressed in appropriate norms such as the uniform norm (see e.g. the recent work \cite{Kuhne2024}).

As previously shown in the context of the Koopman operator (see e.g. \cite{Kurdila,Paruchuri}), approximation theory is a powerful framework to develop efficient finite-dimensional approximation methods and characterize the associated errors. Also, the use of Bernstein polynomials has appeared to be an efficient way to approximate continuous functions \cite{Bernstein}. For instance, Bernstein polynomials are used in one of the proofs of the celebrated Weierstrass Approximation Theorem \cite{korovkin1960linear}. Since then, they have been applied to various fields, such as neural networks \cite{FKYS}, optimal control \cite{CKWHP, Kielas}, control design \cite{Hamadneh2}, differential equations \cite{Doha1}, polynomial interpolation \cite{Marco}, computer-aided geometric design \cite{Winkel}. 
However, to our knowledge, they have not been leveraged in the context of the Koopman operator, apart from a minor use in \cite{MAMI}. In this paper, we fill this gap and propose a novel method to approximate the Koopman operator with Bernstein polynomials. The method has several advantages. Since it directly relies on approximation theory, it is complemented with convergence rates and upper bounds for the approximation error. These bounds are expressed in the uniform norm in terms of the known modulus of continuity of the observable functions (and possibly of their derivatives) and require the sole knowledge of the Lipschitz constant of the map describing the dynamics. Moreover, since the Bernstein operator is bounded in the uniform norm, the proposed approximation is robust to measurement noise, as demonstrated by numerical experiments. Also, a matrix approximation of the Koopman operator is constructed, which does not rely on the matrix pseudoinverse.  Finally, the framework is also amenable to a data-driven setting through the use of an appropriate change of variables, which maps randomly distributed data points onto a regular lattice. A uniform error bound is still provided in this case and numerical experiments suggest that the method also demonstrates good performance for trajectory prediction, in comparison to the EDMD method used with the same basis of functions.

This paper is organized as follows. Section 2 presents the main preliminaries, including the Bernstein polynomials, the Koopman operator framework, and fundamental definitions and properties. A matrix representation of the Koopman operator is derived in Section 3. Section 4 deals with the error analysis, providing error bounds and convergence rates for the approximation of the Koopman operator in one-dimensional and multidimensional cases. The error propagation under the iteration of the Koopman operator approximation is also considered. In Section 5, the approximation method is extended to the data-driven setting and compared to the EDMD method. Finally, conclusion and perspectives are given in Section 6.

\section{Preliminaries}

\subsection{Bernstein polynomials and Bernstein operator}
\label{sec:prelim_Bernstein}

A $n$th Bernstein polynomial, denoted below as $\mathcal{B}_n(f;x)$, is a polynomial of degree $n$ which approximates a continuous function $f$ defined on the interval $[0,1]$.
\begin{definition}
    For a function $f\in C([0,1])$, we define the $n$th Bernstein polynomial by
    \begin{equation*}
\mathcal{B}_n(f;x)=\sum\limits_{k=0}^n f\left(\frac{k}{n} \right) \, b_{n,k}(x), ~~x\in[0,1],~n\in\mathbb{N}
\end{equation*}
where $b_{n,k}(x)=\binom{n}{k}x^k(1-x)^{n-k}$ are the Bernstein basis polynomials of degree $n$. Moreover, we denote by $\mathcal{B}_n:C([0,1]) \to C([0,1])$ the (linear) Bernstein operator which maps $f$ to $\mathcal{B}_n(f;\cdot)$. 
\end{definition}

The above framework can be easily extended to multivariate functions. We have the following definition.
\begin{definition}
For a function $f\in C([0,1]^m)$ with $m\in \mathbb{N}^*$, we define the $m$-variate Bernstein polynomial by
\begin{equation}\label{devis}
\mathcal{B}_\mathbf{n}(f;\textbf{x})=\sum\limits_{{\substack{k_l=0 \\ l=1,\cdots,m}}
}^{n_l} f\left(\frac{k_1}{n_1},\frac{k_2}{n_2},\cdots, \frac{k_m}{n_m} \right) \prod_{l=1}^{m} b_{n_l,k_l}(x_l),\\ 
\end{equation}
with $\textbf{x}=(x_1, \cdots, x_m)\in[0,1]^m$ and $\mathbf{n}=(n_1, \cdots, n_m)\in\mathbb{N}^m$. Moreover, we denote by $\mathcal{B}_\mathbf{n}:C([0,1]^m)\to C([0,1]^m)$ the (linear) Bernstein operator which maps $f$ to $\mathcal{B}_\mathbf{n}(f;\cdot)$. 
\end{definition}

We note that the Bernstein operator is positive, i.e.
\begin{equation}
\label{eq:positivity}
    \mathcal{B}_\mathbf{n}(f;\textbf{x}) \geq 0
\end{equation}
if $f(\textbf{x})\geq 0$ for all $\textbf{x} \in [0,1]^m$.
In particular, this implies that
\begin{equation}
\label{eq:abs}
    \mathcal{B}_\mathbf{n}(|f|;\textbf{x}) \geq \mathcal{B}_\mathbf{n}(f;\textbf{x}) \quad \forall f\in C([0,1]^m).
\end{equation}
Also, the Bernstein operator is bounded with respect to the uniform norm, i.e., $\|\mathcal{B}_\mathbf{n}f\|_\infty\leq\|f\|_\infty$ with $\|f\|_\infty=\sup_{\textbf{x}\in[0,1]^m}|f(x)|$. Thanks to the linearity property, it also follows that $ \|\mathcal{B}_\mathbf{n}f-\mathcal{B}_\mathbf{n}\tilde{f}\|_\infty\leq\|f-\tilde{f}\|_\infty$, making the Bernstein approximation robust to (measurement) errors on the values of $f$.

According to Weierstrass theorem, we have that $\lim_{n \rightarrow \infty} \|\mathcal{B}_nf-f\|_{\infty}=0$ for all $f\in C([0,1])$ and, in the multivariate case,
\begin{equation*}
    \lim_{n_1 \rightarrow \infty} \cdots \lim_{n_m \rightarrow \infty} \|\mathcal{B}_\mathbf{n}f-f\|_{\infty}=0
\end{equation*}
for all $f\in C([0,1]^m)$. This convergence property along with the robustness to measurement errors motivates our use of Bernstein polynomials to approximate the Koopman operator.

\paragraph{Properties.} We finally provide a few basic properties of (multivariate) Bernstein polynomials, which we will use in our developments.

\begin{enumerate}
\item Bernstein basis polynomials form a partition of unity, i.e.
\begin{equation}
\label{eq:Bernstein_constant}
\mathcal{B}_\mathbf{n}(1;\textbf{x})=\sum_{l=1}^m \prod_{l=1}^{m} b_{n_l,k_l}(x_l) = 1.
\end{equation}
\item Bernstein operators preserve the linear polynomial, i.e., 
\begin{equation}
\label{eq:Bernstein_xl}
    \mathcal{B}_\mathbf{n}(u_l;\textbf{x})=x_l
\end{equation}
or equivalently we have

\begin{equation}
\label{eq:Bernstein_first_moment}
    \mathcal{B}_\mathbf{n}\left(\sum\limits_{l=1}^m(u_l-x_l);\textbf{x}\right)= \sum\limits_{{\substack{k_l=0 \\ l=1,\cdots,m}}}^{n_i} \sum\limits_{l=1}^m \left(\frac{k_l}{n_l} -x_l\right) \prod_{l=1}^{m}  b_{n_l,k_l}(x_l) = 0,~~u_l, x_l\in[0,1],
\end{equation}
which is related to the first central moment of the Bernstein polynomials.
\item The second central moment of the Bernstein polynomials satisfies
\begin{equation}
\label{eq:Bernstein_2nd_moment}
\mathcal{B}_\mathbf{n}\left(\sum\limits_{l=1}^m(u_l-x_l)^2;\textbf{x}\right)=\sum\limits_{{\substack{k_l=0 \\ l=1,\cdots,m}}}^{n_i} \sum\limits_{l=1}^m \left(\frac{k_l}{n_l} -x_l\right)^2\prod_{l=1}^{m}  b_{n_l,k_l}(x_l)  =  \sum\limits_{l=1}^m\frac{x_l(1-x_l)}{n_l}. \\
\end{equation}
\end{enumerate}

We can also observe that
\begin{equation}
\label{eq:partial1}
\mathcal{B}_\mathbf{n}(f;\textbf{x})=\mathcal{B}^{(1)}_{n_1}(\mathcal{B}^{(2)}_{n_2}(\dots(\mathcal{B}^{(m)}_{n_m}(f;x_m);x_{m-1})\dots);x_1)
\end{equation}
where $\mathcal{B}^{(l)}_{n_l}$ is the univariate Bernstein operator with respect to $l$th variable, that is
\begin{equation*}
\mathcal{B}^{(l)}_{n_l}(f;x_l) = \sum_{k=0}^{n_l} f\left(u_1,\dots,u_{l-1},\frac{k}{n_l},u_{l+1},\dots,u_m\right) b_{n_l,k}(x_l), 
\end{equation*}
with $u_l,x_l\in[0,1]$, $l=1,2,\dots,m.$
It is clear that the operators can commute in \eqref{eq:partial1}, so that we can rewrite
\begin{equation}
\label{eq:partial2}
\mathcal{B}_\mathbf{n}(f;\textbf{x}) = \overline{\mathcal{B}}^{(-l)}_{\mathbf{n}}(\mathcal{B}^{(l)}_{n_l}(f;x_l);x_1,\dots,x_{l-1},x_{l+1},\dots,x_m) 
\end{equation}
with $\overline{\mathcal{B}}^{(-l)}_{\mathbf{n}} = \mathcal{B}^{(1)}_{n_1} \cdots \mathcal{B}^{(l-1)}_{n_{l-1}} \mathcal{B}^{(l+1)}_{n_{l+1}} \cdots \mathcal{B}^{(m)}_{n_m}$. Then, we have the following lemma.
\begin{lemma}
\label{lem:partial}
    For all $f\in C([0,1]^m)$,
    \begin{equation*}
    \begin{split}
        &\left| \mathcal{B}_\mathbf{n}(f;\textbf{x}) - f(\textbf{x})\right| \\
        & \quad \leq \sum_{l=1}^m \overline{\mathcal{B}}^{(-l)}_{\mathbf{n}}\left(\left| \mathcal{B}^{(l)}_{n_l}(f(x_1,\dots,x_{l-1},u_l,\dots,u_m) - f(x_1,\dots,x_l,u_{l+1},\dots,u_m);x_l)\right|;x_1,\dots,x_{l-1},x_{l+1},\dots,x_m\right),
        \end{split}
    \end{equation*}
    with $u_l,x_l\in[0,1]$, $l=1,2,\dots,m.$
\end{lemma}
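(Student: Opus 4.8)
The plan is to exploit the decomposition \eqref{eq:partial1} of the multivariate Bernstein operator as a composition of univariate Bernstein operators, and to insert a telescoping sum of intermediate functions that interpolate between $f(\mathbf{x})$ and itself one coordinate at a time. Concretely, for $l=0,1,\dots,m$ define the ``hybrid'' function
\[
g_l(\mathbf{x},\mathbf{u}) = f(x_1,\dots,x_l,u_{l+1},\dots,u_m),
\]
so that $g_0 = f(\mathbf{u})$ (the argument on which all the univariate operators act) and $g_m = f(\mathbf{x})$ (a constant in $\mathbf{u}$). The first step is to write $\mathcal{B}_\mathbf{n}(f;\mathbf{x}) - f(\mathbf{x})$ as $\mathcal{B}_\mathbf{n}(g_0;\mathbf{x}) - \mathcal{B}_\mathbf{n}(g_m;\mathbf{x})$, which is legitimate because applying $\mathcal{B}_\mathbf{n}$ to the constant (in $\mathbf{u}$) function $g_m$ returns $f(\mathbf{x})$ by the partition-of-unity property \eqref{eq:Bernstein_constant}. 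Then telescope: $\mathcal{B}_\mathbf{n}(g_0 - g_m;\mathbf{x}) = \sum_{l=1}^m \mathcal{B}_\mathbf{n}(g_{l-1} - g_l;\mathbf{x})$, and apply the triangle inequality to pass the absolute value inside the sum.

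The second step is to handle each summand $\mathcal{B}_\mathbf{n}(g_{l-1}-g_l;\mathbf{x})$. The key observation is that $g_{l-1} - g_l = f(x_1,\dots,x_{l-1},u_l,\dots,u_m) - f(x_1,\dots,x_l,u_{l+1},\dots,u_m)$ depends on $u_l$ (through the first term) but is the difference relevant to the $l$th coordinate; crucially, it does not depend on $u_1,\dots,u_{l-1}$ at all. So when we write $\mathcal{B}_\mathbf{n}$ in the commuted form \eqref{eq:partial2}, namely $\mathcal{B}_\mathbf{n} = \overline{\mathcal{B}}^{(-l)}_{\mathbf{n}} \circ \mathcal{B}^{(l)}_{n_l}$, the inner operator $\mathcal{B}^{(l)}_{n_l}$ acts on the $u_l$-dependence and produces exactly $\mathcal{B}^{(l)}_{n_l}(g_{l-1}-g_l;x_l)$, which still depends on $x_l$ and on the remaining variables $u_{l+1},\dots,u_m$ (via $g_l$'s first term and $g_{l-1}$'s tail). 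Then the outer operator $\overline{\mathcal{B}}^{(-l)}_{\mathbf{n}}$ acts in the variables $x_1,\dots,x_{l-1},x_{l+1},\dots,x_m$ on those remaining $u$-variables. Finally, we use the positivity property \eqref{eq:abs} (together with the fact that each $\mathcal{B}^{(j)}_{n_j}$ preserves nonnegativity, hence so does the product $\overline{\mathcal{B}}^{(-l)}_{\mathbf{n}}$) to bound $|\overline{\mathcal{B}}^{(-l)}_{\mathbf{n}}(\cdot)| \leq \overline{\mathcal{B}}^{(-l)}_{\mathbf{n}}(|\cdot|)$, which yields exactly the claimed bound.

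The main obstacle — more bookkeeping than genuine difficulty — is keeping the roles of the $x_j$'s and $u_j$'s straight, since after applying the $l$th univariate operator some coordinates have been ``collapsed to $x_j$'' and others remain live $u_j$'s, and one must verify that the hybrid function $g_{l-1}-g_l$ genuinely has no dependence on $u_1,\dots,u_{l-1}$ so that the operators $\mathcal{B}^{(1)}_{n_1},\dots,\mathcal{B}^{(l-1)}_{n_{l-1}}$ inside $\overline{\mathcal{B}}^{(-l)}_{\mathbf{n}}$ act trivially (reducing to multiplication by $1$ via partition of unity). One also needs the commutativity of the univariate operators noted after \eqref{eq:partial1} to justify pulling $\mathcal{B}^{(l)}_{n_l}$ to the inside. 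Once these structural facts are in place, the proof is just the telescoping sum plus two applications of standard inequalities (triangle inequality and the positivity estimate \eqref{eq:abs}).
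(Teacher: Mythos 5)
Your proposal is correct and follows essentially the same route as the paper's proof: partition of unity to rewrite $f(\textbf{x})$ as $\mathcal{B}_\mathbf{n}$ applied to a constant (in $\textbf{u}$) function, the telescoping sum through the hybrid functions $f(x_1,\dots,x_{l-1},u_l,\dots,u_m)$ with the triangle inequality, and then the factorization \eqref{eq:partial2} together with the positivity estimate \eqref{eq:abs} to move the absolute value inside $\overline{\mathcal{B}}^{(-l)}_{\mathbf{n}}$. Your remark that each difference $g_{l-1}-g_l$ is independent of $u_1,\dots,u_{l-1}$ is a point the paper leaves implicit, but it is the same argument.
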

\begin{proof}
For $f\in C([0,1]^m)$ and $\textbf{x}\in [0,1]^m$, it follows from \eqref{eq:Bernstein_constant} and the triangular inequality that
\begin{equation*}
\begin{split}
\left| \mathcal{B}_\mathbf{n}(f;\textbf{x}) - f(\textbf{x})\right| & = \left| \mathcal{B}_\mathbf{n}(f(\textbf{u}) - f(\textbf{x});\textbf{x}) \right| \\
&  \leq \left| \mathcal{B}_\mathbf{n}(f(u_1,\dots,u_m) - f(x_1,u_2,\dots,u_m);\textbf{x})\right| \\
& \quad + \left| \mathcal{B}_\mathbf{n}(f(x_1,u_2,\dots,u_m) - f(x_1,x_2,u_3,\dots,u_m);\textbf{x})\right| \\
& \quad + \cdots + \left| \mathcal{B}_\mathbf{n}(f(x_1,\dots,x_{m-1},u_m) - f(x_1,\dots,x_m);\textbf{x})\right|. \\
\end{split}
\end{equation*} 
Using \eqref{eq:abs} and \eqref{eq:partial2}, we obtain
\begin{equation*}
\begin{split}
\left| \mathcal{B}_\mathbf{n}(f;\textbf{x}) - f(\textbf{x})\right| 
& \leq  \overline{\mathcal{B}}^{(-1)}_{\mathbf{n}}\left(\left| \mathcal{B}^{(1)}_{n_1}(f(u_1,\dots,u_m) - f(x_1,u_2,\dots,u_m);x_1)\right|;x_2,\dots,x_m\right) \\
& \quad + \overline{\mathcal{B}}^{(-2)}_{\mathbf{n}}\left(\left| \mathcal{B}^{(2)}_{n_2}(f(x_1,u_2,\dots,u_m) - f(x_1,x_2,u_3,\dots,u_m);x_2)\right|;x_1,x_3\dots,x_m\right) \\
& \quad + \cdots + \overline{\mathcal{B}}^{(-m)}_{\mathbf{n}}\left(\left| \mathcal{B}^{(m)}_{n_m}(f(x_1,\dots,x_{m-1},u_m) - f(x_1,\dots,x_m);x_m)\right|;x_1,\dots,x_{m-1}\right). \\
\end{split}
\end{equation*} 
\end{proof}

\subsection{Koopman operator framework}

Let us consider a continuous map $\phi:D\to D$ that describes a dynamical system in a state space $D \in \mathbb{R}^m$. This map can provide the orbits of a discrete-time system, or can be extracted from the flow $(\phi^t)_{t\geq 0}:D\to D$ generated by a continuous-time system
\begin{equation}\label{dyna}
\dot{\textbf{x}}=F(\textbf{x}),~~\textbf{x}\in D
\end{equation}
where $F:D\to\mathbb{R}^m$ is the vector field. Next, suppose we are given a space $\mathsf{F}$ of functions (or observables) $f:D\to\mathbb{C}$. We have the following definition.
\begin{definition}
 The Koopman (or composition) operator $\mathcal{K}:\mathsf{F}\to\mathsf{F}$ associated with the (continuous) map $\phi$ is defined by
\begin{equation}
\mathcal{K} f=f\circ\phi.
\end{equation}   
\end{definition}
We note that the operator $\mathcal{K}$ is linear, even when the map $\phi$ is nonlinear.

From this point on, we will assume that $D=[0,1]^m$ and consider the space of functions is $\mathsf{F}=C([0,1]^m)$ (equipped with the supremum norm $\|\cdot\|_\infty$), as well as the subspace $\mathsf{F}_\mathbf{n} \subset \mathsf{F}$ of $m$-dimensional polynomials with degree less or equal to $n_l$ in $x_l$, with $l\in\{1,\dots,m\}$. In this case, the Koopman operator can be approximated with the Bernstein operator $~\mathcal{B}_\mathbf{n}:\mathsf{F} \to \mathsf{F}_\mathbf{n}$ according to
 \begin{eqnarray*}
\overline{\mathcal{K}} &:& \mathsf{F}\to \mathsf{F}_\mathbf{n}\\
&& f \mapsto \overline{\mathcal{K}} f  \triangleq ~\mathcal{B}_\mathbf{n} \mathcal{K} f.
\end{eqnarray*}

It follows that we have
\begin{equation}\label{pr}
(\overline{\mathcal{K}} f)(\textbf{x})=\mathcal{B}_\mathbf{n}(\mathcal{K}f;\textbf{x}) =\sum\limits_{{\substack{k_l=0 \\ l=1,\cdots,m}}
}^{n_l} f \circ \phi \left(\frac{k_1}{n_1},\frac{k_2}{n_2},\cdots, \frac{k_m}{n_m} \right) \left(\prod_{l=1}^{m}  b_{n_l,k_l}(x_l)\right), 
\end{equation}
where $\textbf{x}\in[0,1]^m,~\mathbf{n}\in\mathbb{N}^n.$
In Section \ref{sec:matrix}, this approximation will allow us to derive a matrix representation of the operator, which can be used, among other applications, for trajectory prediction.

Since $\mathcal{K}f=f \circ \phi$ is continuous, it follows from Weierstrass theorem that
\begin{equation*}
    \lim_{n_1 \rightarrow \infty} \cdots \lim_{n_m \rightarrow \infty} \|\mathcal{B}_\mathbf{n}\mathcal{K}f-\mathcal{K}f\|_{\infty}=0
\end{equation*}
for $f\in \mathsf{F}$, so that we have uniform convergence of $\overline{\mathcal{K}}f$ to $\mathcal{K}f$. In Section \ref{sec:bounds}, we will investigate the convergence rate and the bounds on this approximation error $\|\overline{\mathcal{K}}f-\mathcal{K}f\|_{\infty}$.

\subsection{Modulus of continuity and Lipschitz continuity}

The theoretical bounds on the approximation error depend on regularity properties of the functions. These properties are related to the following basic concepts, which we recall here for the sake of completeness.

\paragraph{Modulus of continuity}
\begin{definition}
The full modulus of continuity of $f\in C[0,1]^m$ is the function
\begin{eqnarray}\label{def1}
\Omega_f(\delta)=\underset{\|\textbf{x}-\textbf{y}\|\leq\delta}\max \{|f(x_1,x_2,\cdots,x_m)-f(y_1,y_2,\cdots,y_m)|,\, \textbf{x},\textbf{y}\in [0,1]^m\},
\end{eqnarray}
where $\delta$ is a positive number.
\end{definition}

\begin{definition}\label{def2}
The partial modulus of continuity of $f\in C[0,1]^m$ with respect to $x_l$, $l=1,\cdots, m$, is the function
\begin{equation*}
\Omega^{l}_f(\delta)=\underset{{\substack{x_j \in[0,1] \\ j\neq l }}}\max ~~\underset{{\substack{|x_l-y_l| \leq \delta\\ x_l, y_l\in [0,1] }}}\max \left|f(x_1,\cdots, x_{l-1}, x_{l}, x_{l+1}, \cdots, x_m)-f(x_1,\cdots, x_{l-1}, y_{l}, x_{l+1}, \cdots, x_m)\right|.
\end{equation*}
\end{definition}

We note that the (full and partial) modulus of continuity satisfies the following useful properties: 

\begin{enumerate}
    \item \label{pro1}$\underset{\delta\to 0}\lim \Omega_f(\delta)= 0, \qquad \underset{\delta\to 0}\lim \Omega^{(l)}_f(\delta)= 0,$
    \item \label{pro2} For any $\delta>0$ and $\nu>0$, 
\begin{equation}
\label{eq:prop_modulus_nu}
    \Omega_f(\nu\delta)\leq (1+\nu)\Omega_f(\delta), \qquad \Omega^{(l)}_f(\nu\delta)\leq (1+\nu)\Omega^{(l)}_f(\delta),
\end{equation}
\item \label{pro3} For any $\delta>0$,
\begin{equation}
\label{eq:prop_modulus}
    |f(\textbf{y})-f(\textbf{x})| \leq \Omega_f(\delta)\left(\frac{\|\textbf{y}-\textbf{x}\|}{\delta}+1 \right), \qquad |f(\textbf{y})-f(\textbf{x})| \leq \Omega^{(l)}_f(\delta)\left(\frac{\|\textbf{y}-\textbf{x}\|}{\delta}+1 \right).
\end{equation}
\end{enumerate}

In the particular case of univariate functions $f$, we will denote by $\omega_f$ the modulus of continuity
\begin{equation*}
\omega_f(\delta)=\Omega_f(\delta)=\sup\left\{|f(y)-f(x)|:|y-x|\leq\delta,~x,y\in [0,1] \right\}.
\end{equation*}
Note also that we can generalize the modulus of continuity for vector-valued functions $f:[0,1]^m \to \mathbb{R}^p$:
\begin{equation}
    \Omega_{f}(\delta)=\max_{\|\textbf{x}-\textbf{y}\|\leq \delta} \{\|f(\textbf{x})-f(\textbf{y})\|,\, \textbf{x},\textbf{y}\in [0,1]^m\}.
\end{equation}
In this case, we have $\Omega_{f}(\delta) \leq \sum_{l=1}^p \Omega_{f_l}(\delta)$.

\paragraph{Lipschitz continuity}
\begin{definition}
A function $f: \mathbb{R}^n\to\mathbb{R}^m$ is Lipschitz continuous if there exists a constant $L_f$ (denoted hereafter as a Lipschitz constant) such that, for all 
\begin{equation}
\|f(\textbf{x})-f(\textbf{y})\|\leq L_f\|{\textbf{x}}-{\textbf{y}}\|, ~\forall {\textbf{x}}, {\textbf{y}}\in\mathbb{R}^n.
\end{equation}
\end{definition}
Lipschitz continuity also implies partial Lipschitz continuity, in the sense that, for all $l=1,2,\cdots,m$, there exists a constant $L_f^{(l)}$ such that
    \begin{equation*}
\|f(x_1,\cdots, x_{l-1},x_l,x_{l+1}, \cdots, x_m)-f(x_1,\cdots,x_{l-1},y_l,x_{l+1}, \cdots, x_m)\|\leq L_f^{(l)} |x_l-y_l|, ~\forall {\textbf{x}}, {\textbf{y}}\in\mathbb{R}^m.
\end{equation*}

In fact, it can be seen that $L_f^{(l)} \leq L_f \leq \sqrt{\sum_{l=1}^m (L_f^{(l)})^2}$ if $L_f$ and $L_f^{(l)}$ are the smallest Lipschitz constants.

We will use the following result, which connects the modulus of continuity with the Koopman operator.
\begin{lemma}
\label{lem:modulus}
    Let $\mathcal{K}$ be the Koopman operator associated with the Lipschitz continuous map $\phi:[0,1]^m \to [0,1]^m$ (with Lipschitz constant $L_\phi$ and partial Lipschitz constants $L_\phi^{(l)}$). Then, for all $f\in C([0,1]^m)$ and $\delta>0$, we have
    \begin{equation*}
        \Omega_{\mathcal{K}f}(\delta) \leq \left. \Omega_{f}\left(L_{\phi} \delta \right)\right|_{\phi([0,1]^m)}   \quad \textrm{and} \quad \Omega^{(l)}_{\mathcal{K}f}(\delta) \leq \left. \Omega_{f}\left(L_\phi^{(l)} \delta \right)\right|_{\phi([0,1]^m)}, \quad l=1,\dots,m.
    \end{equation*}
\end{lemma}
\begin{proof}
We have
\begin{equation*}
\Omega_{\mathcal{K}f}\left(\delta\right) = \sup\left\{|f(\phi(\textbf{x}))-f(\phi(\textbf{y}))|:~\|\textbf{x}-\textbf{y}\|\leq \delta,~~\textbf{x},\textbf{y}\in[0,1]^m\right\}.
\end{equation*}
It is clear that the inequality $\|\textbf{x}-\textbf{y}\|\leq \delta$ and the Lipschitz continuity of $\phi$ imply that
\begin{equation*}
    \|\phi(\textbf{x})-\phi(\textbf{y})\|\leq L_{\phi}\|\textbf{x}-\textbf{y}\| \leq  L_{\phi} \delta
\end{equation*}
and it follows that
\begin{eqnarray*}
    \Omega_{\mathcal{K}f}\left(\delta\right) & \leq & \sup\left\{|f(\phi(\textbf{x}))-f(\phi(\textbf{y}))|:~||\phi(\textbf{x})-\phi(\textbf{y})||\leq L_{\phi} \delta ,~~\textbf{x},\textbf{y}\in[0,1]^m\right\} \\
    & = & \left. \Omega_{f}\left(L_{\phi} \delta \right)\right|_{\phi([0,1]^m)}.
\end{eqnarray*}
Similarly, we have
\begin{equation*}
\Omega^{l}_{\mathcal{K}f}\left(\delta\right)=\underset{{\substack{x_j \in[0,1] \\ j\neq l }}}\max ~~\underset{{\substack{|x_l-y_l| \leq \delta\\ x_l, y_l\in [0,1] }}}\max |f(\phi((x_1,\cdots, x_{l-1}, x_{l}, x_{l+1}, \cdots, x_m))-f(\phi((x_1,\cdots, x_{l-1}, y_{l}, x_{l+1}, \cdots, x_m))|.
\end{equation*}
Since
\begin{equation*}
\|\phi(x_1,\cdots, x_{l-1},x_l,x_{l+1}, \cdots, x_m)-\phi(x_1,\cdots,x_{l-1},y_l,x_{l+1}, \cdots, x_m)\| \leq L_\phi^{(l)} |x_l-y_l|
\end{equation*}
the inequality $|x_l-y_l|<\delta$ implies
\begin{equation*}
\|\phi(x_1,\cdots, x_{l-1},x_l,x_{l+1}, \cdots, x_m)-\phi(x_1,\cdots,x_{l-1},y_l,x_{l+1}, \cdots, x_m)\| \leq L_\phi^{(l)} \delta,
\end{equation*}
so that
\begin{equation*}
    \Omega^{l}_{\mathcal{K}f}\left(\delta\right)\leq \left.\Omega_{f}\left(L_\phi^{(l)} \delta \right)\right\vert_{\phi([0,1]^m)}.
\end{equation*}
\end{proof}

Finally, the modulus of continuity can be used to characterize the error on the Bernstein approximation of the Koopman operator when the value of the map $\phi$ is measured with some error $\Delta$. The following simple result shows that the approximation is robust to such measurement error.
\begin{lemma}
\label{lem:meas_error}
    Let $\mathcal{K}$ and $\mathcal{K}_\Delta$ be the Koopman operators associated with the continuous maps $\phi:[0,1]^m \to [0,1]^m$ and $\phi_\Delta:[0,1]^m \to [0,1]^m$ with $\phi_\Delta=\phi+\Delta$. Then, for all $f\in C([0,1]^m)$, we have
    \begin{equation*}
     \|\mathcal{B}_\mathbf{n}\mathcal{K} f-\mathcal{B}_\mathbf{n}\mathcal{K}_\Delta f\|_\infty \leq \Omega_f(\|\Delta\|_\infty).
    \end{equation*}
\end{lemma}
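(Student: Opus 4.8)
The plan is to bound the difference pointwise and then take the supremum. Fix $f \in C([0,1]^m)$ and $\mathbf{x} \in [0,1]^m$. By linearity of the Bernstein operator,
\[
\mathcal{B}_\mathbf{n}\mathcal{K}f(\mathbf{x}) - \mathcal{B}_\mathbf{n}\mathcal{K}_\Delta f(\mathbf{x}) = \mathcal{B}_\mathbf{n}\bigl(\mathcal{K}f - \mathcal{K}_\Delta f;\mathbf{x}\bigr) = \mathcal{B}_\mathbf{n}\bigl(f\circ\phi - f\circ\phi_\Delta;\mathbf{x}\bigr).
\]
Using \eqref{eq:abs} (which follows from positivity \eqref{eq:positivity}), we get $\bigl|\mathcal{B}_\mathbf{n}(g;\mathbf{x})\bigr| \leq \mathcal{B}_\mathbf{n}(|g|;\mathbf{x})$ for any $g \in C([0,1]^m)$, applied to $g = f\circ\phi - f\circ\phi_\Delta$. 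Hence it suffices to control $\mathcal{B}_\mathbf{n}\bigl(|f\circ\phi - f\circ\phi_\Delta|;\mathbf{x}\bigr)$.

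Next I would bound the integrand uniformly. For each sample point $\mathbf{k}/\mathbf{n} = (k_1/n_1,\dots,k_m/n_m)$ in the definition of the Bernstein polynomial, we have
\[
\Bigl| f\bigl(\phi(\mathbf{k}/\mathbf{n})\bigr) - f\bigl(\phi_\Delta(\mathbf{k}/\mathbf{n})\bigr) \Bigr| \leq \Omega_f\bigl( \|\phi(\mathbf{k}/\mathbf{n}) - \phi_\Delta(\mathbf{k}/\mathbf{n})\| \bigr) = \Omega_f\bigl( \|\Delta(\mathbf{k}/\mathbf{n})\| \bigr) \leq \Omega_f(\|\Delta\|_\infty),
\]
where the first inequality is the definition of the full modulus of continuity (Definition for $\Omega_f$), the equality uses $\phi_\Delta = \phi + \Delta$, and the last inequality uses that $\Omega_f$ is nondecreasing together with $\|\Delta(\cdot)\| \leq \|\Delta\|_\infty$. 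Since this bound is independent of $\mathbf{k}$, the function $|f\circ\phi - f\circ\phi_\Delta|$ is dominated by the constant $\Omega_f(\|\Delta\|_\infty)$ at all the relevant nodes, so by monotonicity and the partition-of-unity property \eqref{eq:Bernstein_constant},
\[
\mathcal{B}_\mathbf{n}\bigl(|f\circ\phi - f\circ\phi_\Delta|;\mathbf{x}\bigr) \leq \mathcal{B}_\mathbf{n}\bigl(\Omega_f(\|\Delta\|_\infty);\mathbf{x}\bigr) = \Omega_f(\|\Delta\|_\infty).
\]

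Combining the two displays gives $\bigl|\mathcal{B}_\mathbf{n}\mathcal{K}f(\mathbf{x}) - \mathcal{B}_\mathbf{n}\mathcal{K}_\Delta f(\mathbf{x})\bigr| \leq \Omega_f(\|\Delta\|_\infty)$ for every $\mathbf{x} \in [0,1]^m$; taking the supremum over $\mathbf{x}$ yields the claimed bound. Honestly, there is no real obstacle here: the only mild subtlety is making sure that one works with the Bernstein \emph{nodes} $\mathbf{k}/\mathbf{n}$ rather than arbitrary points — the modulus of continuity need only be evaluated there — and that $\Omega_f$ is monotone nondecreasing so that $\Omega_f(\|\Delta(\mathbf{k}/\mathbf{n})\|) \leq \Omega_f(\|\Delta\|_\infty)$; both are immediate from the definitions. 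Everything else is the by-now-standard "dominate by a constant, then use positivity and $\mathcal{B}_\mathbf{n}(1;\cdot)=1$" argument that also underlies the proof of Lemma~\ref{lem:partial}.
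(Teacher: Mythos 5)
Your proof is correct and follows essentially the same route as the paper: both bound $\|f\circ\phi - f\circ\phi_\Delta\|$ by $\Omega_f(\|\Delta\|_\infty)$ and then use the sup-norm contraction of $\mathcal{B}_\mathbf{n}$. The only difference is cosmetic — you re-derive that contraction at the Bernstein nodes via positivity \eqref{eq:abs} and the partition of unity \eqref{eq:Bernstein_constant}, whereas the paper simply invokes the already-stated property $\|\mathcal{B}_\mathbf{n}f-\mathcal{B}_\mathbf{n}\tilde{f}\|_\infty\leq\|f-\tilde{f}\|_\infty$ from Section 2.1.
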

\begin{proof}
    We have that
    \begin{equation*}
        \|\mathcal{K} f-\mathcal{K}_\Delta f \|_\infty = \sup_{\mathbf{x}\in [0,1]^m} |f(\phi(\mathbf{x})) - f(\phi_\Delta(\mathbf{x}))| \leq \Omega_f(\|\Delta\|_\infty)
    \end{equation*}
    and the result follows from the fact that $\|\mathcal{B}_\mathbf{n}\mathcal{K} f-\mathcal{B}_\mathbf{n}\mathcal{K}_\Delta f\|_\infty \leq \|\mathcal{K} f-\mathcal{K}_\Delta f \|_\infty$.
\end{proof}

\section{Matrix representation of the Koopman operator}
\label{sec:matrix}

In this section, we will construct the matrix representation $\mathbf{K}_\mathcal{B}$ of the finite-dimensional Bernstein approximation $\overline{\mathcal{K}}=\mathcal{B}_\mathbf{n}\mathcal{K}$ of the Koopman operator $\mathcal{K}$, restricted to the subspace $\mathsf{F}_\mathbf{n}$ of polynomials of degree $\mathbf{n}$. This so-called Koopman matrix $\mathbf{K}_\mathcal{B}$ is such that
\begin{equation*}
    \mathcal{B}_\mathbf{n}(\mathcal{K} B;\textbf{x}) = \overline{\mathcal{K}} B(\textbf{x}) = \mathbf{K}_\mathcal{B} \, B(\textbf{x}),
\end{equation*}
with the vector of Bernstein polynomials
\begin{equation*}
B(\textbf{x})=B^{(1)}(x_1)\otimes \cdots\otimes B^{(m)}(x_m) = \begin{bmatrix}
    b_{n_1,0}(x_1)\\
    b_{n_1,1}(x_1)\\
\vdots\\
b_{n_1,n_1}(x_1)
\end{bmatrix} \otimes \cdots\otimes \begin{bmatrix}
b_{n_m,0}(x_m)\\
b_{n_m,1}(x_m)\\
\vdots\\
b_{n_m,n_m}(x_m)
\end{bmatrix}
\end{equation*}
and where $\mathcal{K}B$ is a vector obtained by applying $\mathcal{K}$ on each component of $B$. We note that the components of $B$ are $m$-dimensional Bernstein polynomials $B_j=\prod_{l=1}^{m} b_{n_l,\alpha_l(j)}$, with the map $\alpha:\mathbb{N} \to \mathbb{N}^m$ that refers to the lexicographic order (given the definition of the Kronecker product). It follows that the matrix $\mathbf{K}_\mathcal{B}$ is the representation of the approximation of the Koopman operator in the Bernstein polynomial basis of $\mathsf{F}_\mathbf{n}$. Alternatively, the matrix approximation of the Koopman operator can also be represented in the basis of monomials. Since
\begin{equation*}
b_{n_l,k}(x_l)=\sum\limits_{j=k}^{n_l}(-1)^{j-k}{n_l\choose j}{j\choose k}x_l^j, \qquad l=1,\dots,m
\end{equation*}
we have $B^{(l)}(x_l) = \mathbf{C}^{(l)} X^{(l)}(x_l)$, with the vector of monomials
\begin{equation*}
X^{(l)}(x_l)=\begin{bmatrix}
    1\\
    x_l\\
\vdots\\
x_l^{n_l}
\end{bmatrix}, \qquad l=1,\dots,m
\end{equation*}
and the matrix $\mathbf{C}^{(l)} \in \mathbb{R}^{(n+1)\times(n+1)}$ with entries
\begin{equation*}
    \mathbf{C}^{(l)}_{k+1,j+1} = (-1)^{j-k}{n_l\choose j}{j\choose k}, \qquad l=1,\dots,m.
\end{equation*}

Denoting $X(\textbf{x})=X^{(1)} \otimes \cdots \otimes X^{(m)}$ and $\mathbf{C}=\mathbf{C}^{(1)} \otimes \cdots \otimes \mathbf{C}^{(m)}$, we obtain
\begin{equation}
\label{eq:Bernstein_monomials}
B(\textbf{x}) = \mathbf{C} X(\textbf{x})
\end{equation}
and we can define the Koopman matrix representation in the monomial basis:
\begin{equation*}
    \mathbf{K}^X_\mathcal{B} = \mathbf{C}^{-1} \mathbf{K}_\mathcal{B} \mathbf{C}.
\end{equation*}
We verify that we have
\begin{equation*}
\begin{split}
    \mathcal{B}_\mathbf{n}(\mathcal{K} X;\textbf{x}) = \mathcal{B}_\mathbf{n}(\mathcal{K} \mathbf{C}^{-1} B;\textbf{x}) & = \mathbf{C}^{-1} \mathcal{B}_\mathbf{n}(\mathcal{K} B;\textbf{x}) \\
    & = \mathbf{C}^{-1} \mathbf{K}_\mathcal{B} B(\textbf{x}) = \mathbf{C}^{-1} \mathbf{K}_\mathcal{B} \mathbf{C} X(\textbf{x}) = \mathbf{K}^X_\mathcal{B} X(\textbf{x}).
    \end{split}
\end{equation*}

We can now derive the expression of the matrix $\mathbf{K}_\mathcal{B}$. We first note that
\begin{equation*}
    [\mathbf{K}_\mathcal{B} B(\textbf{x})]_i = \mathcal{B}_\mathbf{n}(\mathcal{K} B_i;\textbf{x}) = \mathcal{B}_\mathbf{n} (B_i\circ \phi;\textbf{x}) = \sum\limits_{{\substack{k_l=0 \\ l=1,\cdots,m}}
}^{n_l} B_i \circ \phi \left(\frac{k_1}{n_1},\cdots, \frac{k_m}{n_m} \right) \prod_{l=1}^{m} b_{n_l,k_l}(x_l)
\end{equation*}
where $[\cdot]_{i}$ denotes the $i$th component of a vector. Equivalently, we can write
\begin{equation*}
      [\mathbf{K}_\mathcal{B} B(\textbf{x})]_i = \sum\limits_{j=1}^{N} B_i \circ \phi \left(\hat{x}_j\right) B_j(\textbf{x})
\end{equation*}
with $\hat{x}_j = (\alpha_1(j)/n_1,\cdots, \alpha_m(j)/n_m)$ and $N=\Pi_{j=1}^m (n_j+1)$. Then it is clear that the $i$th row of $\mathbf{K}_\mathcal{B}$ has entries of the form $B_i(\phi(\hat{x}_j))$, i.e.
\begin{equation*}
    \mathbf{K}_\mathcal{B} = [B(\phi(\hat{x}_1)) \, B(\phi(\hat{x}_2)) \, \cdots \, B(\phi(\hat{x}_{N})))].
\end{equation*}
Using \eqref{eq:Bernstein_monomials}, we can write
\begin{equation*}
    \mathbf{K}_\mathcal{B} = \mathbf{C} \mathbf{U} \, 
\end{equation*}
with the matrix
\begin{equation*}
    \mathbf{U} = [X(\phi(\hat{x}_1)) \, X(\phi(\hat{x}_2)) \, \cdots \, X(\phi(\hat{x}_{N})))].
\end{equation*}
Moreover, we also have
\begin{equation*}
    \mathbf{K}_\mathcal{B}^X = \mathbf{U} \mathbf{C}.
\end{equation*}
Note that both matrices $\mathbf{K}_\mathcal{B}$ and $\mathbf{K}_\mathcal{B}^X$ can be computed efficiently through a mere matrix multiplication.

Finally, if we denote by $\gamma_j$ the index associated with the monomial $x_j$, i.e. $[X(\textbf{x})]_{\gamma_j}=x_j$ (note that we should have $\gamma_j=1+\prod_{l=j+1}^m (n_l+1)$ for $j<m$ and $\gamma_m=2$), we can obtain
\begin{equation}
    \label{eq:prediction}
    \phi_j(\textbf{x}) = \mathcal{K} x_j \approx \overline{\mathcal{K}} x_j = [(\mathbf{K}^X_\mathcal{B}) X(\textbf{x})]_{\gamma_j} = [(\mathbf{U} \mathbf{C}) X(\textbf{x})]_{\gamma_j}.
\end{equation}
Iterating the Koopman matrix $\mathbf{K}_\mathcal{B}$ or $\mathbf{K}^X_\mathcal{B}$ therefore provides a linear approximation of the system trajectory.

The following example leverages the matrix Bernstein approximation in the context of trajectory prediction. Although the Koopman operator framework can be used for various purposes that are not limited to prediction (e.g. analysis, control, etc.), the latter is a natural performance criterion to assess the quality of the approximation.

\begin{example}
Consider the Van der Pol dynamics
\begin{eqnarray*}
\dot{x}_1&=&x_2\\
\dot{x}_2&=&0.5(1-x_1^2)x_2-x_1
\end{eqnarray*}
and the flow map $\phi$ generated at time $t=0.3$, which is rescaled from $[-3,3]^2$ to $[0,1]^2$. The Bernstein approximation of the Koopman operator is computed for $n=n_1=n_2\in\{10,20,25\}$. In Figure \ref{ven1}, the true trajectory starting from the initial condition $x_0=(0.4,0.)$ is compared with an estimated trajectory computed with the linear approximation \eqref{eq:prediction}, i.e. $\phi_j^{kt}(x_0) \approx [ (\mathbf{K}^{X}_\mathcal{B})^k X(x_0)]_{\gamma_j}$. It is observed that increasing the degree of the polynomials increases the short-term prediction accuracy (see Table \ref{tab:short_term_vdp}). For large degrees, the Bernstein approximation provides very good prediction of a few iterations ahead, but finally diverges due to the matrix instability. This illustrates a trade-off between short-term and long-term prediction, which requires a proper tuning of the polynomial degree $n$. Note that, alternatively, the values of the monomials could be computed at each iteration, i.e. $\phi_j^{(k+1)t} \approx [(\mathbf{K}_\mathcal{B}^X X) X(\phi^{kt})]_{\gamma_j}$. This yields better prediction (not shown) but at the cost of a nonlinear predictor that does not leverage the linear Koopman operator framework. Finally, we investigate the performance of the approximation in the context of measurement noise. To do so, we use the values $\phi(\hat{x}_j)+\Delta$, where $\Delta$ is a Gaussian random vector with zero mean and component-wise standard deviation $\sigma \in \{0.001, 0.01, 0.1\}$. We observe in Table \ref{tab:noise} that the Bernstein approximation is robust to noise and is in particular not affected by low noise level, in agreement with the result of Lemma \ref{lem:meas_error}.

\begin{figure}[h!]
     \centering  
 \includegraphics[width=0.5\textwidth]{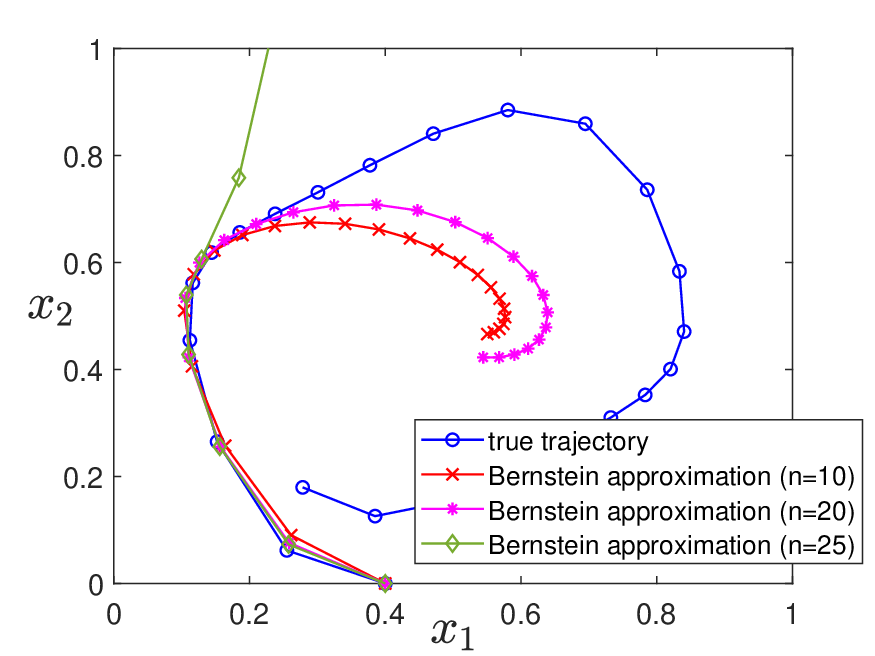} 
    \caption[]{Predicted trajectories of the Van der Pol system obtained with the Bernstein approximation of the Koopman operator, for different degrees of the polynomials.}
    \label{ven1}
\end{figure}

\begin{table}
    \centering
     
    \begin{tabular}{c|cccccc}
            $n$ & $k=1$  & $k=2$ & $k=3$ & $k=4$ & $k=5$ & $k=6$ \\
         \hline
         $10$ & 0.0290 &  0.0141  &  0.0487  &  0.0533  &  0.0483 &  0.0503 \\
         $20$ & 0.0144 &   0.0112  &  0.0319  &  0.0303  &  0.0260  &  0.0271 \\
         $25$ & 0.0115  &  0.0100  &  0.0266  &  0.0245  &  0.0191  &  0.1021
    \end{tabular}
    \caption{Short-term prediction error for the Van der Pol system.}
    \label{tab:short_term_vdp}
\end{table}

\begin{table}
    \centering
  \pagebreak   
    \begin{tabular}{c|cccc}
            $n$ & $\sigma=0$  & $\sigma=0.001$ & $\sigma=0.01$ & $\sigma=0.1$ \\
         \hline
         $10$ & 0.0290 &  0.0291  &  0.0295  &  0.0627 \\
         $20$ & 0.0144 &  0.0143  &  0.0152  &  0.0439 \\
         $25$ & 0.0115 &  0.0114 & 0.0121 & 0.0434
    \end{tabular}
    \caption{One-step prediction error for the Van der Pol system, in the case of noisy measurements (Gaussian random noise with standard deviation $\sigma$). The error is averaged over $50$ simulations.}
    \label{tab:noise}
\end{table}

\end{example}

\pagebreak
\section{Bounds on the approximation error}
\label{sec:bounds}

In this section, we estimate approximation error bounds by assuming that the observable functions are either continuous or continuous differentiable. Convergence rates of the approximation error are also obtained. We first consider the case of one-dimensional systems, and then extend the results to the multivariate case.

\subsection{One-dimensional case}

We first have the following simple result, which provides a uniform bound on the Bernstein approximation of the Koopman operator in the case of one-dimensional systems described by a Lipschitz continuous map. The result is derived from a well-known result on Bernstein approximation errors. \\

\begin{theorem}\label{Th1}
Let $\phi:[0,1] \to [0,1]$ be a Lipschitz continuous map (with Lipschitz constant $L_\phi$). Then, for any $f\in C([0,1])$, an upper bound on the error in approximating $\mathcal{K}f$ by $\mathcal{B}_n\mathcal{K}f$ is given by
\begin{equation}
\label{eq:result1}
\|\mathcal{B}_n\mathcal{K}f-\mathcal{K}f\|_\infty\leq \frac{3}{2}\omega_{f}\left(\frac{L_{\phi}}{\sqrt{n}}\right)\bigg|_{\left[\phi(0),\phi(1)\right]}.
\end{equation}
\end{theorem}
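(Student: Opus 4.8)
The plan is to reduce the statement to the classical uniform estimate for Bernstein approximation of a single continuous function, namely the inequality
\begin{equation*}
\|\mathcal{B}_n g - g\|_\infty \leq \tfrac{3}{2}\,\omega_g\!\left(\tfrac{1}{\sqrt{n}}\right), \qquad g\in C([0,1]),
\end{equation*}
which is the well-known refinement (due to Popoviciu/Sikkema-type arguments) of the standard Bernstein error bound. I would apply this with $g=\mathcal{K}f=f\circ\phi$, which is indeed in $C([0,1])$ since $\phi$ is continuous (Lipschitz) and $f$ is continuous. This immediately gives
\begin{equation*}
\|\mathcal{B}_n\mathcal{K}f-\mathcal{K}f\|_\infty \leq \tfrac{3}{2}\,\omega_{\mathcal{K}f}\!\left(\tfrac{1}{\sqrt{n}}\right).
\end{equation*}

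The second and only substantive step is to control $\omega_{\mathcal{K}f}$ in terms of $\omega_f$. Here I would invoke Lemma~\ref{lem:modulus} in the one-dimensional case ($m=1$), which states precisely that $\Omega_{\mathcal{K}f}(\delta)\leq \Omega_f(L_\phi\delta)\big|_{\phi([0,1])}$; specializing to univariate notation this reads $\omega_{\mathcal{K}f}(\delta)\leq \omega_f(L_\phi\delta)\big|_{[\phi(0),\phi(1)]}$, where I use that $\phi([0,1])=[\phi(0),\phi(1)]$ (or $[\phi(1),\phi(0)]$) since $\phi$ is continuous on an interval, hence its image is a closed interval with these endpoints (if $\phi$ is monotone) — more carefully, $\phi([0,1])\subseteq[\min_{[0,1]}\phi,\max_{[0,1]}\phi]$ and the modulus restricted to a larger interval only increases, so writing $[\phi(0),\phi(1)]$ is at worst conservative; alternatively one simply notes that the relevant sample points $\phi(k/n)$ all lie in $\phi([0,1])$. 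Taking $\delta=1/\sqrt{n}$ yields $\omega_{\mathcal{K}f}(1/\sqrt{n})\leq \omega_f(L_\phi/\sqrt{n})\big|_{[\phi(0),\phi(1)]}$, and combining with the previous display gives \eqref{eq:result1}.

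I expect the main (minor) obstacle to be the precise justification of the constant $3/2$ and the interval over which the modulus of continuity is evaluated. The constant $3/2$ is not the naive one coming from the Cauchy–Schwarz estimate on the second moment (that would give $2$ or a worse constant with an extra factor); it requires the sharper analysis of the Bernstein operator, which I would simply cite as a known result (e.g. from Lorentz's book on Bernstein polynomials, or the stated ``well-known result''). The interval restriction is handled by Lemma~\ref{lem:modulus} together with the elementary observation that the image of $[0,1]$ under a continuous map is contained in $[\phi(0),\phi(1)]$ only when $\phi$ is monotone; in general one should interpret $\omega_f\big|_{[\phi(0),\phi(1)]}$ loosely as $\omega_f\big|_{\phi([0,1])}$, which is what actually enters the estimate since only the values $f(\phi(k/n))$ and $f(\phi(x))$ for $x\in[0,1]$ appear. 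Everything else is a direct substitution, so the proof is short.
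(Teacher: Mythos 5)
Your proposal is correct and follows essentially the same route as the paper's proof: cite the classical $\tfrac{3}{2}\,\omega_g(1/\sqrt{n})$ Bernstein estimate applied to $g=\mathcal{K}f=f\circ\phi$, then invoke Lemma \ref{lem:modulus} to replace $\omega_{\mathcal{K}f}(1/\sqrt{n})$ by $\omega_f(L_\phi/\sqrt{n})$ restricted to the image of $\phi$. Your side remark that $[\phi(0),\phi(1)]$ should really be read as $\phi([0,1])$ (or enlarged conservatively) is a fair observation but does not change the argument.
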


\begin{proof}
Since $\phi$ is continuous in $[0,1]$, so is the function $\mathcal{K}f=f \circ \phi$. Then, it is well-known that a bound on the Bernstein approximation error for $\mathcal{K}f$ is given by (see e.g. \cite{Carothers})
\begin{equation}\label{in2}
\|\mathcal{B}_n\mathcal{K}f-\mathcal{K}f\|_{\infty}\leq \frac{3}{2}\omega_{\mathcal{K}f}\left(\frac{1}{\sqrt{n}}\right).
\end{equation}
Moreover, it follows from Lemma \ref{lem:modulus} that
\begin{equation*}
    \omega_{\mathcal{K}f}\left(\frac{1}{\sqrt{n}}\right) \leq  \omega_{f}\left(\frac{L_{\phi}}{\sqrt{n}}\right)\bigg|_{\left[\phi(0),\phi(1)\right]},
\end{equation*}
which completes the proof.
\end{proof}

Note that the modulus of continuity of $f$ evaluated over $[\phi(0),\phi(1)] \subseteq [0,1]$ can be bounded by the modulus of continuity over $[0,1]$. Moreover, in the case $L_\phi/\sqrt{n}>1$, the value of the modulus of continuity $\omega_f(L_\phi/\sqrt{n})$ should obviously be replaced by $\omega_f(1)$ in \eqref{eq:result1}.

According to the above result, when the function $f$ is assumed to be continuous, the convergence rate of the approximation error is of the order of $n^{-1/2}$, which can be considered as slow. In order to obtain a faster rate of convergence, we now assume that the function $f$ and the map $\phi$ are continuously differentiable.

\begin{theorem}\label{Th2}
Let $\phi:[0,1] \to [0,1]$ be a Lipschitz continuous map (with Lipschitz constant $L_{\phi}$) and whose derivative is also Lipschitz continuous (with Lipschitz constant $L_{\phi'}$). Then, for any $f\in C^1([0,1])$, an upper bound on the error in approximating $\mathcal{K}f$ by $\mathcal{B}_n\mathcal{K}f$ is given by
\begin{eqnarray*}
\|\mathcal{B}_n\mathcal{K}f-\mathcal{K}f\|_\infty\leq \frac{1}{\sqrt{n}} \left( L_{\phi} \, \omega_{f'}\left(\frac{L_{\phi}}{2\sqrt{n}}\right)\bigg|_{[\phi(0),\phi(1)]}     +\underset{0\leq x\leq1}\sup |f'(\phi(x))|\, \frac{L_{\phi'}}{2\sqrt{n}}  \right).
\end{eqnarray*}
\end{theorem}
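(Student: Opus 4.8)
The plan is to reduce the estimate to the classical bound for the Bernstein approximation of a continuously differentiable function, applied to $g:=\mathcal{K}f=f\circ\phi$, and then to rewrite the modulus of continuity of $g'$ in terms of $\omega_{f'}$, $L_\phi$ and $L_{\phi'}$. Since $f\in C^1([0,1])$ and $\phi$ is continuously differentiable (its derivative being Lipschitz, hence continuous), the chain rule gives $g\in C^1([0,1])$ with $g'(x)=f'(\phi(x))\,\phi'(x)$; moreover $|\phi'(x)|\le L_\phi$ for every $x\in[0,1]$, because $\phi$ is $L_\phi$-Lipschitz.

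The first step is the auxiliary inequality: for every $g\in C^1([0,1])$ and every $\delta>0$,
\begin{equation*}
\|\mathcal{B}_n g-g\|_\infty\le\omega_{g'}(\delta)\left(\frac{1}{4n\delta}+\frac{1}{2\sqrt n}\right),
\end{equation*}
which, on choosing $\delta=1/(2\sqrt n)$, collapses to $\|\mathcal{B}_n g-g\|_\infty\le\frac{1}{\sqrt n}\,\omega_{g'}\!\left(\frac{1}{2\sqrt n}\right)$. To obtain it I would write $\mathcal{B}_n g(x)-g(x)=\sum_{k=0}^n\left(g(k/n)-g(x)\right)b_{n,k}(x)$ and use the fundamental theorem of calculus to split $g(k/n)-g(x)=g'(x)(k/n-x)+\int_x^{k/n}\left(g'(t)-g'(x)\right)dt$. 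The linear term drops after summation by the first-central-moment identity \eqref{eq:Bernstein_first_moment}. For the remainder, every $t$ between $x$ and $k/n$ satisfies $|t-x|\le|k/n-x|$, so the integral is bounded in absolute value by $|k/n-x|\,\omega_{g'}(|k/n-x|)$, and \eqref{eq:prop_modulus_nu} then gives the bound $\omega_{g'}(\delta)\left((k/n-x)^2/\delta+|k/n-x|\right)$. Summing against $b_{n,k}(x)$, the second-central-moment identity \eqref{eq:Bernstein_2nd_moment} gives $\sum_k(k/n-x)^2b_{n,k}(x)=x(1-x)/n\le1/(4n)$, while Cauchy--Schwarz combined with the partition of unity \eqref{eq:Bernstein_constant} gives $\sum_k|k/n-x|\,b_{n,k}(x)\le\sqrt{x(1-x)/n}\le1/(2\sqrt n)$; together these yield the auxiliary bound.

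The second step is to estimate $\omega_{g'}$ from the product structure of $g'$. For $x,y\in[0,1]$, inserting $\pm f'(\phi(x))\phi'(y)$ gives
\begin{equation*}
|g'(x)-g'(y)|\le|f'(\phi(x))|\,|\phi'(x)-\phi'(y)|+|\phi'(y)|\,|f'(\phi(x))-f'(\phi(y))|.
\end{equation*}
Using $|\phi'(x)-\phi'(y)|\le L_{\phi'}|x-y|$, $|\phi'(y)|\le L_\phi$, and — arguing exactly as in Lemma \ref{lem:modulus}, since $|\phi(x)-\phi(y)|\le L_\phi|x-y|$ — $|f'(\phi(x))-f'(\phi(y))|\le\omega_{f'}(L_\phi|x-y|)\big|_{[\phi(0),\phi(1)]}$, then taking the supremum over $|x-y|\le\delta$, I get
\begin{equation*}
\omega_{g'}(\delta)\le L_\phi\,\omega_{f'}(L_\phi\delta)\big|_{[\phi(0),\phi(1)]}+\sup_{0\le x\le1}|f'(\phi(x))|\;L_{\phi'}\,\delta.
\end{equation*}
Substituting $\delta=1/(2\sqrt n)$ here and in the conclusion of the first step produces exactly the bound stated in the theorem.

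I expect the auxiliary $C^1$ Bernstein estimate to be the main obstacle: one must control the first absolute moment $\sum_k|k/n-x|\,b_{n,k}(x)$ (via Cauchy--Schwarz, as above) and pick the near-optimal $\delta=1/(2\sqrt n)$ so that the constant collapses to the clean factor $1/\sqrt n$ with argument $1/(2\sqrt n)$; everything else is routine. A minor point to keep in mind, exactly as in the proof of Theorem \ref{Th1}, is that the modulus of continuity of $f'$ is to be read on the range of $\phi$ (an interval containing $[\phi(0),\phi(1)]$), which is the meaning of the restriction $\big|_{[\phi(0),\phi(1)]}$.
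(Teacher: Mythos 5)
Your proposal is correct and follows essentially the same route as the paper: the same Taylor-type splitting $g(k/n)-g(x)=g'(x)(k/n-x)+\int_x^{k/n}(g'(t)-g'(x))\,dt$ with the linear term killed by the first-moment identity, the same Cauchy--Schwarz bound of the first absolute moment and choice $\delta=1/(2\sqrt n)$, and the same product decomposition of $(\mathcal{K}f)'=f'(\phi)\,\phi'$ combined with Lemma \ref{lem:modulus}. The only (cosmetic) difference is that you isolate the $C^1$ Bernstein estimate as an auxiliary inequality for a general $g$ before specializing to $g=\mathcal{K}f$, whereas the paper carries out the computation directly and then substitutes $\mathcal{K}f$.
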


\begin{proof}
Let $x$ be a fixed point in $[0,1]$. Then, for $f\in C^1[0,1]$, one can write
\begin{equation*}
f(u)-f(x)=(u-x)f'(x)+\int\limits_x^u (f'(v)-f'(x))~dv.
\end{equation*}
Applying the operator $\mathcal{B}_n$ on both sides of the above expression, we obtain
\begin{eqnarray}
\mathcal{B}_n(f(u);x)-f(x) \mathcal{B}_n(1;x)&=& \left(\mathcal{B}_n(u;x)-x\mathcal{B}_n(1;x) \right) f'(x) +\mathcal{B}_n\left(\int\limits_x^u (f'(v)-f'(x))~dv;x\right)\nonumber\\
\mathcal{B}_n(f(u);x)-f(x)&=& \mathcal{B}_n\left(\int\limits_x^u (f'(v)-f'(x))~dv;x\right),
\end{eqnarray}
where we used \eqref{eq:Bernstein_constant} and \eqref{eq:Bernstein_xl}. By using the property \eqref{eq:prop_modulus} of the modulus of continuity, we obtain
\begin{eqnarray*}
\left| \left(\int\limits_x^u (f'(v)-f'(x))~dv\right)\right|&\leq & \int\limits_x^u |f'(v)-f'(x)|~dv \\
&\leq & \int\limits_x^u \omega_{f'}(\delta) \left( \frac{|v-x|}{\delta}+1\right)~dv \\
&\leq &\omega_{f'}(\delta)\left(\frac{(u-x)^2}{\delta} +|u-x|\right)
\end{eqnarray*}
for any $\delta>0$. Then, it follows from \eqref{eq:positivity} and \eqref{eq:abs} that
$$|\mathcal{B}_n(f(u);x)-f(x)| \leq \mathcal{B}_n\left(\left| \left(\int\limits_x^u (f'(v)-f'(x))~dv\right)\right| ;x\right) \leq \omega_{f'}(\delta) \, \mathcal{B}_n \left(\frac{(u-x)^2}{\delta} +|u-x| \right). $$
Using \eqref{eq:Bernstein_constant}, \eqref{eq:Bernstein_2nd_moment}, and the Cauchy-Schwartz inequality, we have
\begin{eqnarray*}
|\mathcal{B}_n(f(u);x)-f(x)| &\leq & \omega_{f'}(\delta)\left(\frac{x(1-x)}{n\delta} +\mathcal{B}_n(|u-x|;x)\right)\\
&= & \omega_{f'}(\delta)\left(\frac{x(1-x)}{n\delta} +\left(\sum\limits_{k=0}^n (b_{n,k}(x))^{\frac{1}{2}}\left|\left(\frac{k}{n} -x\right)^2\right|^{\frac{1}{2}}\cdot (b_{n,k}(x))^{\frac{1}{2}}\right) \right)\\
&\leq & \omega_{f'}(\delta)\left(\frac{x(1-x)}{n\delta}+\sqrt{\sum\limits_{k=0}^n b_{n,k}\left(\frac{k}{n} -x\right)^2}\cdot\sqrt{\sum\limits_{k=0}^n b_{n,k}}\right)\\
& = & \omega_{f'}(\delta)\left(\frac{x(1-x)}{n\delta}+\sqrt{\frac{x(1-x)}{n}}\right)\\
& \leq & \omega_{f'}(\delta)\left(\frac{1}{4n\delta}+\frac{1}{2\sqrt{n}}\right)\\
& \leq &\frac{1}{\sqrt{n}} \omega_{f'}\left(\frac{1}{2\sqrt{n}}\right),
\end{eqnarray*}
where we have set $\delta=1/(2\sqrt{n})$. Thus, we can write
\begin{equation}\label{uw1}
|\mathcal{B}_n(\mathcal{K}f;x)-\mathcal{K}f|\leq \frac{1}{\sqrt{n}}~\omega_{(\mathcal{K}f)'}\left(\frac{1}{2\sqrt{n}}\right).
\end{equation}
Moreover, we have
\begin{eqnarray*}
\omega_{(\mathcal{K}f)'}\left(\frac{1}{2\sqrt{n}}\right)&=&\sup\left\{|(\mathcal{K}f)'(x)-(\mathcal{K}f)'(y)|:|x-y|\leq \frac{1}{2 \sqrt{n}};~x,y\in[0,1] \right\}\\
&=&\sup\left\{|f'(\phi(x))\phi'(x)-f'(\phi(y))\phi'(y)|:|x-y|\leq\frac{1}{2 \sqrt{n}};~x,y\in[0,1] \right\}
\end{eqnarray*}
and
\begin{equation*}
\begin{split}
|f'(\phi(x))\phi'(x)-f'(\phi(y))\phi'(y)| & =|f'(\phi(x))\phi'(x)-f'(\phi(y))\phi'(x)+f'(\phi(y))(\phi'(x)-\phi'(y))|\\
& \leq  |\phi'(x)| |f'(\phi(x))-f'(\phi(y))|+|f'(\phi(y))||\phi'(x)-\phi'(y)|.
\end{split}
\end{equation*}
Then, we get
\begin{equation*}
\begin{split}
\omega_{(\mathcal{K}f)'}\left(\frac{1}{2\sqrt{n}}\right) & \leq \underset{x\in [0,1]}\sup  
|\phi'(x)|\underset{x, y\in[0,1]}\sup\bigg\{\{ |f'(\phi(x))-f'(\phi(y))|: |x- y|\leq\frac{1}{ 2\sqrt{n}} \}\bigg\} \\
& \quad +\underset{x\in[0,1]}\sup\left\{|f'(\phi(x))|\right\} \, \underset{x, y\in[0,1]}\sup\left\{|(\phi'(x)-\phi'(y))|:|x-y|\leq\frac{1}{2\sqrt{n}}\right\} \\
& = L_{\phi} \, \omega_{\mathcal{K}f'}\left(\frac{1}{2\sqrt{n}}\right)  +\underset{x\in[0,1]}\sup\left\{|f'(\phi(x))|\right\} \, \frac{L_{\phi'}}{2\sqrt{n}} \\
\end{split}
\end{equation*}
and it follows from Lemma \ref{lem:modulus} that
\begin{equation*}
    \omega_{(\mathcal{K}f)'}\left(\frac{1}{2\sqrt{n}}\right) \leq {L_{\phi}} \, \omega_{f'}\left(\frac{L_{\phi}}{2\sqrt{n}}\right)\bigg|_{[\phi(0),\phi(1)]}+\underset{0\leq y\leq1}\sup |f'(\phi(y))| \, \frac{L_{\phi'}}{2\sqrt{n}}.
\end{equation*}
The result follows by combining the above inequality with \eqref{uw1}.
\end{proof}

This result provides a better convergence rate than Theorem \ref{Th1} (i.e. order $n^{-1}$ instead of $n^{-1/2}$), which is expected since the map and the observable are assumed to be continuously differentiable in the latter case. This property is illustrated with the following numerical example.

\begin{example}
\label{ex:uni}

The dynamics $\dot x=-x(1+x)$ generate the flow map $\phi_t(x)=\frac{x}{(e^t+x(e^t-1)}$ at any time $t$. For the map at $t=1$ and the function $f=x^2/2$, we compute the approximation error $\|\mathcal{B}_n\mathcal{K}f-\mathcal{K}f\|_\infty$ as a function of the number $n$ of Bernstein basis functions (Figure \ref{figc}). The error is compared with the upper bounds obtained in Theorem \ref{Th1} and Theorem \ref{Th2}. The results confirm the expected rates of convergence and show that a tighter bound is obtained when the first derivative of $f$ is considered.

\begin{figure}[h!]
    \centering 
    \includegraphics[width=.5\textwidth]{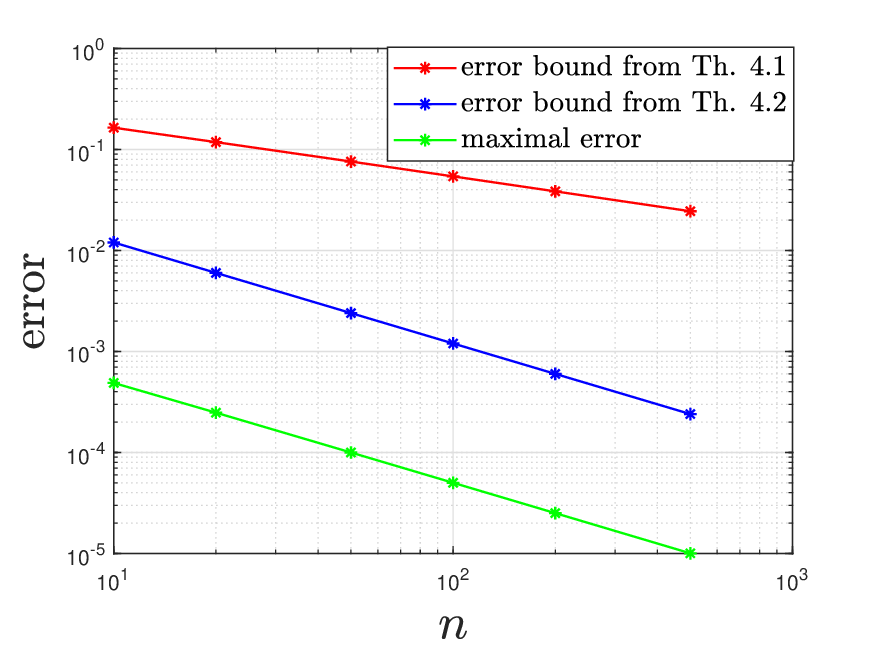}  
    \caption[]{Error bounds obtained with Theorem \ref{Th1} and Theorem \ref{Th2}, for the flow map considered in Example \ref{ex:uni}.}
    \label{figc}
\end{figure}
\end{example}

\subsection{Approximation of the Koopman operator in the multivariate case}

This subsection is devoted to the study of the Bernstein approximation of the Koopman operator in the multivariate case, in terms of the full and partial modulus of continuity.

\begin{theorem}\label{theo_multi}
Let $\phi:[0,1]^m \to [0,1]^m$ be a Lipschitz continuous map (with Lipschitz constant $L_\phi$). Then, for any $f\in C([0,1]^m)$, an upper bound on the error in approximating $\mathcal{K}f$ by $\mathcal{B}_\mathbf{n}\mathcal{K}f$ is given by
\begin{eqnarray*}
\|\mathcal{B}_\mathbf{n} \mathcal{K}f-\mathcal{K}f\|_\infty \leq\frac{3}{2} \left.\Omega_f\left(L_{\phi}\sqrt{\sum_{l=1}^m \frac{1}{n_l}}\right)\right|_{\phi([0,1]^m)}.
\end{eqnarray*}
\end{theorem}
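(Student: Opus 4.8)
The plan is to reduce the multivariate statement to the one-dimensional result of Theorem \ref{Th1} via the tensor-product structure captured in Lemma \ref{lem:partial}. First I would apply Lemma \ref{lem:partial} to the function $\mathcal{K}f = f\circ\phi \in C([0,1]^m)$, which bounds $\left|\mathcal{B}_\mathbf{n}(\mathcal{K}f;\mathbf{x}) - \mathcal{K}f(\mathbf{x})\right|$ by a sum over $l=1,\dots,m$ of terms of the form $\overline{\mathcal{B}}^{(-l)}_{\mathbf{n}}$ applied to $\left|\mathcal{B}^{(l)}_{n_l}(g_l(\cdot,x_l);x_l)\right|$, where $g_l$ is a one-variable increment of $\mathcal{K}f$ in the $l$th coordinate. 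For each fixed value of the remaining variables, the inner quantity $\left|\mathcal{B}^{(l)}_{n_l}(\mathcal{K}f;x_l) - \mathcal{K}f\right|$ is exactly a univariate Bernstein approximation error, so the classical bound \eqref{in2} gives that it is at most $\frac{3}{2}\,\Omega^{(l)}_{\mathcal{K}f}(1/\sqrt{n_l})$ (using the partial modulus of continuity of $\mathcal{K}f$ in the $l$th variable).

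Next I would use the second inequality of Lemma \ref{lem:modulus}, namely $\Omega^{(l)}_{\mathcal{K}f}(\delta) \leq \Omega_f(L_\phi^{(l)}\delta)\big|_{\phi([0,1]^m)}$, to replace each partial modulus of $\mathcal{K}f$ by the full modulus of $f$ evaluated at $L_\phi^{(l)}/\sqrt{n_l}$ over the image $\phi([0,1]^m)$. Since the outer operators $\overline{\mathcal{B}}^{(-l)}_{\mathbf{n}}$ are positive and preserve constants (partition of unity, \eqref{eq:Bernstein_constant}), applying them to a constant bound leaves it unchanged; hence, taking the supremum over $\mathbf{x}$, I obtain
\begin{equation*}
\|\mathcal{B}_\mathbf{n}\mathcal{K}f - \mathcal{K}f\|_\infty \leq \frac{3}{2}\sum_{l=1}^m \left.\Omega_f\!\left(\frac{L_\phi^{(l)}}{\sqrt{n_l}}\right)\right|_{\phi([0,1]^m)}.
\end{equation*}

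The final step is to collapse this sum of $m$ modulus terms into a single one matching the claimed bound $\frac{3}{2}\Omega_f\!\left(L_\phi\sqrt{\sum_l 1/n_l}\right)$. Using $L_\phi^{(l)} \leq L_\phi$ and the subadditivity-type property \eqref{eq:prop_modulus_nu} (which yields $\Omega_f(\nu\delta) \leq (1+\nu)\Omega_f(\delta)$, and hence a bound of the form $\Omega_f(a) + \Omega_f(b) \leq$ a single modulus at a larger argument via a common reference scale $\delta = L_\phi\sqrt{\sum_l 1/n_l}$), I would write each $L_\phi/\sqrt{n_l}$ as $\nu_l\delta$ with $\nu_l = 1/\sqrt{n_l} \big/ \sqrt{\sum_j 1/n_j} \le 1$, so that $\Omega_f(\nu_l \delta) \le (1+\nu_l)\Omega_f(\delta)$; summing and using $\sum_l \nu_l \le \sqrt{m \sum_l \nu_l^2} = \sqrt{m}$ — or, more sharply, bounding directly via the Cauchy–Schwarz relation between $\sum 1/\sqrt{n_l}$ and $\sqrt{\sum 1/n_l}$ — gives the stated form, possibly after adjusting constants. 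I expect this last consolidation to be the main obstacle: getting from a genuine sum of $m$ moduli to a single modulus at the $\ell^2$-type argument $\sqrt{\sum_l 1/n_l}$ with the clean constant $\frac{3}{2}$ requires choosing the reference scale carefully and may in fact proceed more naturally by applying the univariate estimate directly to the full modulus rather than the partial one, or by invoking a Korovkin-type multivariate estimate on the second central moment \eqref{eq:Bernstein_2nd_moment} together with property \eqref{eq:prop_modulus} — which is likely the route the authors take, since \eqref{eq:Bernstein_2nd_moment} already exhibits the quantity $\sum_l x_l(1-x_l)/n_l \le \frac14\sum_l 1/n_l$ that produces exactly $\sqrt{\sum_l 1/n_l}$ after a Cauchy–Schwarz step.
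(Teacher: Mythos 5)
Your first two steps (Lemma \ref{lem:partial} together with the univariate estimate and Lemma \ref{lem:modulus}) are sound, but they prove Theorem \ref{theo_multi_2}, not Theorem \ref{theo_multi}, and the consolidation you propose at the end does not close the gap. Writing $L_\phi/\sqrt{n_l}=\nu_l\delta$ with $\delta=L_\phi\sqrt{\sum_j 1/n_j}$ and $\nu_l\le 1$, property \eqref{eq:prop_modulus_nu} only gives $\Omega_f(\nu_l\delta)\le(1+\nu_l)\Omega_f(\delta)$; summing over $l$ produces a factor of order $m+\sum_l\nu_l\ge m+1$, i.e.\ at best a bound $\tfrac{3}{2}(m+\sqrt{m})\,\Omega_f(\delta)$, whose constant grows with $m$ and cannot be compressed to the clean $\tfrac{3}{2}$: a modulus of continuity need not be small at arguments comparable to $\delta$, so each of the $m$ summands can genuinely be of size $\Omega_f(\delta)$. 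Indeed the paper remarks right after Theorem \ref{theo_multi_2} that the full-modulus and partial-modulus bounds are incomparable in general, so Theorem \ref{theo_multi} cannot be deduced from the estimate your main route establishes.

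The argument you mention only in your closing hedge is the one the paper actually uses, and it has to be carried out rather than invoked: bound $|\mathcal{B}_\mathbf{n}(\mathcal{K}f;\textbf{x})-\mathcal{K}f(\textbf{x})|$ directly by $\sum_{k_1,\dots,k_m}\Omega_{\mathcal{K}f}\bigl(\|(k_1/n_1,\dots,k_m/n_m)-\textbf{x}\|\bigr)\prod_l b_{n_l,k_l}(x_l)$ via the partition of unity \eqref{eq:Bernstein_constant}, apply \eqref{eq:prop_modulus_nu} with a free reference scale $\delta$, use Cauchy--Schwarz to move the Euclidean distance inside a square root of the second central moment \eqref{eq:Bernstein_2nd_moment}, bound $\sqrt{\sum_l x_l(1-x_l)/n_l}\le\tfrac{1}{2}\sqrt{\sum_l 1/n_l}$, and choose $\delta=\sqrt{\sum_l 1/n_l}$ to obtain the constant $\tfrac{3}{2}$; Lemma \ref{lem:modulus} then converts $\Omega_{\mathcal{K}f}$ into $\Omega_f(L_\phi\,\cdot)\big|_{\phi([0,1]^m)}$. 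As written, your proposal has a genuine gap at the final consolidation step and does not establish the stated bound.
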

\begin{proof} The proof is inspired from the proof given in \cite{Carothers} for the univariate case. For $f\in C[0,1]^m$ and $x\in [0,1]^m$, we obtain
\begin{eqnarray*}
\left|\mathcal{B}_\mathbf{n}(f;\textbf{x})-f(\textbf{x})\right|
&=&\left| \sum\limits_{{\substack{k_l=0 \\ l=1,\cdots,m}}
}^{n_l}f\left(\frac{k_1}{n_1},\frac{k_2}{n_2},\cdots, \frac{k_m}{n_m} \right) \left(\prod_{l=1}^{m}  b_{n_l,k_l}(x_l)\right)-f(\textbf{x})\right|\\
&=& \left|\sum\limits_{{\substack{k_l=0 \\ l=1,\cdots,m}}
}^{n_l}\left[f\left(\frac{k_1}{n_1},\frac{k_2}{n_2},\cdots, \frac{k_m}{n_m} \right)-f(x)\right] \prod_{l=1}^{m}  b_{n_l,k_l}(x_l)\right|\\
&\leq & \sum\limits_{{\substack{k_l=0 \\ l=1,\cdots,m}}
}^{n_l}\left|f\left(\frac{k_1}{n_1},\frac{k_2}{n_2},\cdots, \frac{k_m}{n_m} \right)-f(x)\right| \prod_{l=1}^{m}  b_{n_l,k_l}(x_l)~~\\
&\leq & \sum\limits_{{\substack{k_l=0 \\ l=1,\cdots,m}}
}^{n_l} \Omega_f\left(\sqrt{\sum\limits_{l=1}^m\left(\frac{k_l}{n_l}-x_l\right)^2} \right)\prod_{l=1}^{m}  b_{n_l,k_l}(x_l),
\end{eqnarray*}
where we used the partition of unity property \eqref{eq:Bernstein_constant}, the triangle inequality, and the definition of the modulus of continuity. Next, the property \eqref{eq:prop_modulus_nu} of the modulus of continuity yields
\begin{equation*}
\Omega_{f}\left(\sqrt{\sum\limits_{l=1}^m\left(\frac{k_l}{n_l}-x_l\right)^2} \right)
\leq  \Omega_{f}\left(\delta \right)\left(1+\frac{1}{\delta}\cdot \sqrt{\sum\limits_{l=1}^m\left(\frac{k_l}{n_l}-x_l\right)^2} \right)
\end{equation*}
for any $\delta>0$. It follows that, using \eqref{eq:Bernstein_constant} twice and the Cauchy-Schwartz inequality, we have
\begin{equation*}
\begin{split}
&|\mathcal{B}_\mathbf{n}(f;\textbf{x})-f(\textbf{x})|\\
& \, \leq \Omega_{f}\left(\delta \right) \left(\sum\limits_{{\substack{k_l=0 \\ l=1,\cdots,m}}
}^{n_l} \prod_{l=1}^{m}  b_{n_l,k_l}(x_l) +\frac{1}{\delta}\sum\limits_{{\substack{k_l=0 \\ l=1,\cdots,m}}
}^{n_l}  \sqrt{\sum\limits_{l=1}^m\left(\frac{k_l}{n_l}-x_l\right)^2}\prod_{l=1}^{m}  b_{n_l,k_l}(x_l) \right)\\ 
& \, \leq \Omega_{f}\left(\delta \right) \left(1+\frac{1}{\delta}\sum\limits_{{\substack{k_l=0 \\ l=1,\cdots,m}}}^{n_l} \left(\sum\limits_{l=1}^m\left(\frac{k_l}{n_l}-x_l\right)^2\prod_{l=1}^{m}  b_{n_l,k_l}(x_l)\right)^{1/2} \left( \prod_{l=1}^{m}  b_{n_l,k_l}(x_l)\right)^{\frac{1}{2}}   \right) \\
& \, \leq \Omega_{f}\left(\delta \right) \left(1+\frac{1}{\delta}\left(\sum\limits_{{\substack{k_l=0 \\ l=1,\cdots,m}}}^{n_l} \sum\limits_{l=1}^m\left(\frac{k_l}{n_l}-x_l\right)^2\prod_{l=1}^{m}  b_{n_l,k_l}(x_l)\right)^{\frac{1}{2}} \left(\sum\limits_{{\substack{k_l=0 \\ l=1,\cdots,m}}}^{n_l}  \prod_{l=1}^{m}  b_{n_l,k_l}(x_l)\right)^{\frac{1}{2}}  \right) \\
& \, \leq \Omega_{f}\left(\delta \right) \left(1+\frac{1}{\delta}\left(\sum\limits_{{\substack{k_l=0 \\ l=1,\cdots,m}}}^{n_l} \sum\limits_{l=1}^m\left(\frac{k_l}{n_l}-x_l\right)^2\prod_{l=1}^{m}  b_{n_l,k_l}(x_l)\right)^{\frac{1}{2}} \right).
\end{split}
\end{equation*}
Next, \eqref{eq:Bernstein_2nd_moment} leads to 
\begin{eqnarray*}
|\mathcal{B}_\mathbf{n}(f;\textbf{x})-f(\textbf{x})| 
&\leq & \Omega_{f}\left(\delta \right) \left(1+\frac{1}{\delta}\sqrt{\sum_{l=1}^m\frac{x_l(1-x_l)}{n_l}} \right)\\
&\leq & \Omega_{f}\left(\delta \right) \left(1+\frac{1}{2\delta}\sqrt{\sum_{l=1}^m\frac{1}{n_l}} \right)\\
&=& \frac{3}{2}\Omega_{f}\left(\sqrt{\sum_{l=1}^m \frac{1}{n_l}} \right)
\end{eqnarray*}
where we have set $\delta=\sqrt{\sum_{l=1}^m \frac{1}{n_l}}$.\\

In the context of the Koopman operator, since $\mathcal{K}f$ is continuous on $[0,1]^m$, we can write
\begin{equation*}\label{in3}
|\mathcal{B}_\mathbf{n}(\mathcal{K}f;\textbf{x})-\mathcal{K}f(\textbf{x})| \leq   \frac{3}{2}\Omega_{\mathcal{K}f}\left(\sqrt{\sum_{l=1}^m \frac{1}{n_l}} \right)
\end{equation*}
and it follows from Lemma \ref{lem:modulus} that
\begin{equation*}
    \Omega_{\mathcal{K}f}\left(\sqrt{\sum_{l=1}^m \frac{1}{n_l}} \right) \leq \left.\Omega_{f}\left(L_\phi \sqrt{\sum_{l=1}^m \frac{1}{n_l}} \right)\right|_{\phi([0,1]^m)},
\end{equation*}
which concludes the proof.
\end{proof}

By considering the partial modulus of continuity, we can also directly extend the results of the uni-dimensional case to the multivariate case. The next result extends Theorem \ref{Th1} to the multivariate case.
\begin{theorem}\label{theo_multi_2}
Let $\phi:[0,1]^m \to [0,1]^m$ be a Lipschitz continuous map (with partial Lipschitz constants $L^{(l)}_\phi$, $l=1,\dots,m$). Then, for any $f\in C([0,1]^m)$, an upper bound on the error in approximating $\mathcal{K}f$ by $\mathcal{B}_\mathbf{n}\mathcal{K}f$ is given by
\begin{eqnarray*}
\|\mathcal{B}_\mathbf{n}\mathcal{K}f-\mathcal{K}f\|_{\infty}\leq \frac{3}{2} \sum\limits_{l=1}^m \left.\Omega_f\left(\frac{L_\phi^{(l)}}{\sqrt{n_l}}\right)\right|_{\phi([0,1]^m)}.
\end{eqnarray*}
\end{theorem}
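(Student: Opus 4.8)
The plan is to reduce the multivariate error $\|\mathcal{B}_\mathbf{n}\mathcal{K}f-\mathcal{K}f\|_\infty$ to a sum of one‑dimensional Bernstein errors via Lemma \ref{lem:partial}, then to invoke the classical univariate bound \eqref{in2} coordinate by coordinate, and finally to translate partial moduli of $\mathcal{K}f$ into moduli of $f$ through Lemma \ref{lem:modulus}.

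First I would apply Lemma \ref{lem:partial} to the continuous function $\mathcal{K}f=f\circ\phi$, which gives, for every $\textbf{x}\in[0,1]^m$,
\[
\left|\mathcal{B}_\mathbf{n}(\mathcal{K}f;\textbf{x})-\mathcal{K}f(\textbf{x})\right|\leq \sum_{l=1}^m \overline{\mathcal{B}}^{(-l)}_{\mathbf{n}}\!\left(\left|\mathcal{B}^{(l)}_{n_l}\!\big(g_l(u_l)-g_l(x_l);x_l\big)\right|;\,\cdots\right),
\]
where, for frozen values (all in $[0,1]$) of the remaining coordinates, $g_l$ denotes the corresponding univariate slice $t\mapsto(\mathcal{K}f)(\dots,t,\dots)$ of $\mathcal{K}f$ in the $l$th variable. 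The key observation is that $g_l(x_l)$ does not depend on the summation index of $\mathcal{B}^{(l)}_{n_l}$, so by the partition of unity \eqref{eq:Bernstein_constant} we have $\mathcal{B}^{(l)}_{n_l}(g_l(u_l)-g_l(x_l);x_l)=\mathcal{B}^{(l)}_{n_l}(g_l;x_l)-g_l(x_l)$; i.e. the inner quantity is exactly a genuine univariate Bernstein approximation error.

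Next I would bound this inner quantity uniformly. By the univariate estimate \eqref{in2} (from \cite{Carothers}),
\[
\left|\mathcal{B}^{(l)}_{n_l}(g_l;x_l)-g_l(x_l)\right|\leq \|\mathcal{B}^{(l)}_{n_l}g_l-g_l\|_\infty \leq \tfrac{3}{2}\,\omega_{g_l}\!\left(\tfrac{1}{\sqrt{n_l}}\right),
\]
and since the univariate modulus of any $l$th‑coordinate slice of $\mathcal{K}f$ is dominated by the partial modulus, $\omega_{g_l}(\delta)\leq \Omega^{(l)}_{\mathcal{K}f}(\delta)$ regardless of the frozen coordinates, the inner term is at most the constant $\tfrac{3}{2}\Omega^{(l)}_{\mathcal{K}f}(1/\sqrt{n_l})$. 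Because $\overline{\mathcal{B}}^{(-l)}_{\mathbf{n}}$ is a composition of positive operators that fix constants (again \eqref{eq:Bernstein_constant} and positivity), applying it to this constant upper bound leaves it unchanged, so each summand above is $\leq \tfrac{3}{2}\Omega^{(l)}_{\mathcal{K}f}(1/\sqrt{n_l})$.

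Summing over $l$ and taking the supremum over $\textbf{x}$ yields $\|\mathcal{B}_\mathbf{n}\mathcal{K}f-\mathcal{K}f\|_\infty\leq \tfrac{3}{2}\sum_{l=1}^m \Omega^{(l)}_{\mathcal{K}f}(1/\sqrt{n_l})$, and substituting $\Omega^{(l)}_{\mathcal{K}f}(1/\sqrt{n_l})\leq \Omega_f(L_\phi^{(l)}/\sqrt{n_l})\big|_{\phi([0,1]^m)}$ from Lemma \ref{lem:modulus} gives the claimed bound. I expect the only delicate step to be the bookkeeping in the reduction: confirming that the expression inside $\mathcal{B}^{(l)}_{n_l}$ in Lemma \ref{lem:partial} is a bona fide univariate Bernstein error (so that \eqref{in2} applies) and that its bound is uniform in the coordinates being averaged by $\overline{\mathcal{B}}^{(-l)}_{\mathbf{n}}$; the rest is a direct chaining of already‑established inequalities.
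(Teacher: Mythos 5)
Your proposal is correct and follows essentially the same route as the paper: Lemma \ref{lem:partial} to split the multivariate error into coordinate-wise univariate Bernstein errors, the classical bound \eqref{in2} applied to each slice (dominated by the partial modulus $\Omega^{(l)}_{\mathcal{K}f}$, uniformly in the frozen/averaged coordinates), constant preservation and positivity of $\overline{\mathcal{B}}^{(-l)}_{\mathbf{n}}$ to discard the outer operator, and Lemma \ref{lem:modulus} to pass from $\Omega^{(l)}_{\mathcal{K}f}$ to $\Omega_f$ with the partial Lipschitz constants. The only difference is cosmetic bookkeeping (you apply Lemma \ref{lem:partial} to $\mathcal{K}f$ first and then the univariate estimate, while the paper states the coordinate-wise estimate first), which does not change the argument.
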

\begin{proof}
For a univariate function $f\in C([0,1])$, it follows from (the proof of) Theorem \ref{Th1} that 
\begin{equation*}
|\mathcal{B}_n(f;x)-f(x)| \leq \frac{3}{2} \omega_f\left(\frac{1}{\sqrt{n}}\right).
\end{equation*}
This trivially implies that, for $f\in C([0,1]^m)$,
\begin{equation*}
|\mathcal{B}^{(l)}_{n_l}(f;x_l)-f(x_l)| \leq \frac{3}{2} \Omega^{(l)}_f\left(\frac{1}{\sqrt{n_l}}\right)
\end{equation*}
with the operator $\mathcal{B}^{(l)}_{n_l}$ defined in Section \ref{sec:prelim_Bernstein}. Next, it follows from Lemma \ref{lem:partial} that
\begin{equation*}
|\mathcal{B}_\mathbf{n}(f;\textbf{x})-\mathcal{K}f(\textbf{x})| \leq \sum_{l=1}^m \overline{\mathcal{B}}^{(-l)}_{\mathbf{n}} \left(\frac{3}{2} \Omega^{(l)}_f\left(\frac{1}{\sqrt{n_l}}\right) \right) = \frac{3}{2} \sum_{l=1}^m \Omega^{(l)}_f\left(\frac{1}{\sqrt{n_l}}\right)
\end{equation*}
where we used \eqref{eq:Bernstein_constant} and the fact that $\Omega^{(l)}_f(1/\sqrt{n_l})$ is a constant. In the context of the Koopman operator, since $\mathcal{K}f$ is continuous, we have
\begin{equation}\label{k}
|\mathcal{B}_\mathbf{n}(\mathcal{K}f;\textbf{x})-\mathcal{K}f(\textbf{x})|\leq  \frac{3}{2} \sum\limits_{l=1}^m  \Omega^{(l)}_{\mathcal{K}f}\left(\sqrt{\frac{1}{n_l}}\right)
\end{equation}
and the result follows from Lemma \ref{lem:modulus}.
\end{proof}
It is noticeable that the least conservative bound can be obtained with either Theorem \ref{theo_multi} or \ref{theo_multi_2}, depending on the properties of the map $\phi$ and the number of Bernstein basis functions. This will illustrated in Example \ref{ex:multi}.

Finally, we extend the result of Theorem \ref{Th2}.
\begin{theorem}
\label{theo_multi_3}
Let $\phi:[0,1]^m \to [0,1]^m$ be a Lipschitz continuous map (with partial Lipschitz constant $L^{(l)}_{\phi}$) and whose partial derivatives $\partial_{l} \phi$ with respect to the $l$th variable are also Lipschitz continuous (with partial Lipschitz constant $L^{(l)}_{\partial_l \phi}$) for all $l$. Then, for any $f\in C^1([0,1]^m)$, an upper bound on the error in approximating $\mathcal{K}f$ by $\mathcal{B}_\mathbf{n}\mathcal{K}f$ is given by
\begin{equation*}
\|\mathcal{B}_\mathbf{n}\mathcal{K}f-\mathcal{K}f\|_\infty\leq \sum_{l=1}^m \frac{1}{\sqrt{n_l}} \left( L^{(l)}_{\phi} \,\Omega_{\nabla f}\left(\frac{L^{(l)}_{\phi}}{2\sqrt{n_l}}\right)\bigg|_{\phi([0,1]^m)}     +\underset{\textbf{x}\in\phi([0,1]^m)}\sup \|\nabla f(x)\| \, \frac{L^{(l)}_{\partial_l\phi}}{2\sqrt{n_l}}  \right).
\end{equation*}
\end{theorem}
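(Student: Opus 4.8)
The plan is to follow the pattern of the proof of Theorem~\ref{theo_multi_2}: use Lemma~\ref{lem:partial} to split the multivariate error $|\mathcal{B}_\mathbf{n}(\mathcal{K}f;\mathbf{x})-\mathcal{K}f(\mathbf{x})|$ into a sum of $m$ one-dimensional contributions, and control each of them with the one-dimensional $C^1$ estimate obtained inside the proof of Theorem~\ref{Th2}, namely
\[
|\mathcal{B}_{n}(g(u);x)-g(x)| \leq \frac{1}{\sqrt{n}}\,\omega_{g'}\!\left(\frac{1}{2\sqrt{n}}\right), \qquad g\in C^1([0,1]).
\]
Applying Lemma~\ref{lem:partial} to $\mathcal{K}f=f\circ\phi$ (which is $C^1$ since $f\in C^1$ and the partial derivatives of $\phi$ exist), the $l$th term inside the sum is $|\mathcal{B}^{(l)}_{n_l}(h_l(u_l)-h_l(x_l);x_l)| = |\mathcal{B}_{n_l}(h_l;x_l)-h_l(x_l)|$, where I use the partition of unity~\eqref{eq:Bernstein_constant} and where $h_l$ is the one-dimensional slice $u_l\mapsto(\mathcal{K}f)(x_1,\dots,x_{l-1},u_l,u_{l+1},\dots,u_m)$ obtained by freezing all coordinates but the $l$th. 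Each $h_l$ lies in $C^1([0,1])$ and, by the chain rule, $h_l'(u_l)=\nabla f(\phi(\mathbf{v}))\cdot\partial_l\phi(\mathbf{v})$, where $\mathbf{v}$ is the point whose $l$th coordinate is $u_l$ and whose remaining coordinates are the frozen ones. The one-dimensional estimate with $n=n_l$ then gives, pointwise in $x_l$, $|\mathcal{B}_{n_l}(h_l;x_l)-h_l(x_l)| \leq \tfrac{1}{\sqrt{n_l}}\,\omega_{h_l'}(1/(2\sqrt{n_l}))$.

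The crucial point is to bound $\omega_{h_l'}$ uniformly in the frozen coordinates. For $|u_l-y_l|\leq\delta$, denoting by $\mathbf{v},\mathbf{w}$ the corresponding points (which differ only in the $l$th slot), I would add and subtract $\nabla f(\phi(\mathbf{w}))\cdot\partial_l\phi(\mathbf{v})$ and apply Cauchy--Schwarz to obtain
\[
|h_l'(u_l)-h_l'(y_l)| \leq \|\partial_l\phi(\mathbf{v})\|\,\|\nabla f(\phi(\mathbf{v}))-\nabla f(\phi(\mathbf{w}))\| + \|\nabla f(\phi(\mathbf{w}))\|\,\|\partial_l\phi(\mathbf{v})-\partial_l\phi(\mathbf{w})\|.
\]
Then I use $\|\partial_l\phi(\mathbf{v})\|\leq L^{(l)}_\phi$ (partial differentiability plus partial Lipschitz continuity of $\phi$), $\|\partial_l\phi(\mathbf{v})-\partial_l\phi(\mathbf{w})\|\leq L^{(l)}_{\partial_l\phi}\delta$, and --- since $\|\phi(\mathbf{v})-\phi(\mathbf{w})\|\leq L^{(l)}_\phi\delta$ --- $\|\nabla f(\phi(\mathbf{v}))-\nabla f(\phi(\mathbf{w}))\|\leq\Omega_{\nabla f}(L^{(l)}_\phi\delta)\big|_{\phi([0,1]^m)}$, together with $\|\nabla f(\phi(\mathbf{w}))\|\leq\sup_{\mathbf{x}\in\phi([0,1]^m)}\|\nabla f(\mathbf{x})\|$, which is finite by continuity of $\nabla f$ on a compact set. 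This produces a bound on $\omega_{h_l'}(\delta)$ that does not depend on the frozen coordinates; setting $\delta=1/(2\sqrt{n_l})$ yields precisely
\[
\omega_{h_l'}\!\left(\frac{1}{2\sqrt{n_l}}\right) \leq L^{(l)}_\phi\,\Omega_{\nabla f}\!\left(\frac{L^{(l)}_\phi}{2\sqrt{n_l}}\right)\bigg|_{\phi([0,1]^m)} + \sup_{\mathbf{x}\in\phi([0,1]^m)}\|\nabla f(\mathbf{x})\|\,\frac{L^{(l)}_{\partial_l\phi}}{2\sqrt{n_l}}.
\]

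Since the resulting bound on the $l$th term is a constant (independent of all remaining variables, frozen or not), applying $\overline{\mathcal{B}}^{(-l)}_\mathbf{n}$ leaves it unchanged by the partition of unity~\eqref{eq:Bernstein_constant}, exactly as in the proof of Theorem~\ref{theo_multi_2}. Summing the $m$ terms provided by Lemma~\ref{lem:partial} and taking the supremum over $\mathbf{x}\in[0,1]^m$ gives the claimed inequality. The main obstacle is the uniform control of $\omega_{h_l'}$ in the last display: one must check that the chain-rule form of $h_l'$ splits so that each factor is dominated by the hypothesized partial Lipschitz constants of $\phi$ and of $\partial_l\phi$ and by the uniform bound and modulus of continuity of $\nabla f$, all constants being independent of the coordinates held fixed in the slice. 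A minor point to handle carefully is that the one-dimensional estimate from Theorem~\ref{Th2} must be invoked pointwise in $x_l$ before taking suprema, so that the uniform bound on $\omega_{h_l'}$ is what ultimately absorbs the dependence on the other coordinates.
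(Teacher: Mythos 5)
Your proposal is correct and follows essentially the same route as the paper: split the error into $m$ one-dimensional contributions via Lemma~\ref{lem:partial} (as in Theorem~\ref{theo_multi_2}), apply the pointwise $C^1$ estimate \eqref{uw1} from the proof of Theorem~\ref{Th2} to each slice, and bound the resulting (partial) modulus of continuity of $\partial_l(\mathcal{K}f)$ by the chain rule, the add-and-subtract/Cauchy--Schwarz splitting, the partial Lipschitz constants $L^{(l)}_\phi$, $L^{(l)}_{\partial_l\phi}$, and Lemma~\ref{lem:modulus} adapted to $\nabla f$. Your explicit treatment of the frozen coordinates and the uniformity of the bound on $\omega_{h_l'}$ is exactly what the paper's use of the partial modulus $\Omega^{(l)}_{\partial_l(\mathcal{K}f)}$ encodes.
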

\begin{proof}
    The inequality \eqref{uw1} in the proof of Theorem \ref{Th2} trivially yields
    \begin{equation*}
|\mathcal{B}^{(l)}_{n_l}(\mathcal{K}f;\textbf{x})-\mathcal{K}f(\textbf{x})| \leq \frac{1}{\sqrt{n_l}} \Omega^{(l)}_{\partial_l(\mathcal{K}f)} \left(\frac{1}{2\sqrt{n_l}}\right)
\end{equation*}
and following similar lines as in the proof of Theorem \ref{theo_multi_2}, we obtain
\begin{equation}
\label{eq:intermediate_ineq}
    |\mathcal{B}_\mathbf{n}(\mathcal{K}f;\textbf{x})-\mathcal{K}f(\textbf{x})|\leq \sum_{l=1}^m  \frac{1}{\sqrt{n_l}} \Omega^{(l)}_{\partial_l(\mathcal{K}f)} \left(\frac{1}{2\sqrt{n_l}}\right).
\end{equation}
Similarly to the proof of Theorem \ref{Th2}, we have
\begin{equation*}
\begin{split}
\Omega^{(l)}_{\partial_l(\mathcal{K}f)}\left(\frac{1}{2\sqrt{n_l}}\right)
& =\sup\left\{|\nabla f(\phi(\textbf{x})) \partial_l\phi(\textbf{x})-\nabla f(\phi(\textbf{y}))\partial_l\phi(\textbf{y})|:|x_l-y_l|\leq\frac{1}{2 \sqrt{n}};~\textbf{x},\textbf{y}\in[0,1]^m \right\}
\end{split}
\end{equation*}
and
\begin{equation*}
\begin{split}
|\nabla f(\phi(\textbf{x}))\partial_l \phi(\textbf{x})-\nabla f(\phi(\textbf{y}))\partial_l\phi(\textbf{y})|
&\leq  \|\partial_l\phi(\textbf{x})\| \|\nabla f(\phi(\textbf{x}))-\nabla f(\phi(\textbf{y}))\|+\|\nabla f(\phi(\textbf{y}))\|\|\partial_l\phi(\textbf{x})-\partial_l\phi(\textbf{y})\|.
\end{split}
\end{equation*}
Then, we obtain
\begin{equation*}
\begin{split}
\Omega^{(l)}_{\partial_l(\mathcal{K}f)}\left(\frac{1}{2\sqrt{n_l}}\right) & \leq \underset{\textbf{x}\in [0,1]^m}\sup  
\|\partial_l \phi(\textbf{x})\|\underset{\textbf{x}, \textbf{y}\in[0,1]^m}\sup\bigg\{\{ \|\nabla f(\phi(\textbf{x}))-\nabla f(\phi(\textbf{y}))\|: |x_l- y_l|\leq\frac{1}{ 2\sqrt{n_l}} \}\bigg\} \\
& \quad +\underset{\textbf{x}\in[0,1]^m}\sup\left\{\|\nabla f(\phi(\textbf{x}))\|\right\} \, \underset{\textbf{x}, \textbf{y}\in[0,1]^m}\sup\left\{\|(\partial_l \phi(\textbf{x})-\partial_l \phi(\textbf{y}))\|:|x_l-y_l|\leq\frac{1}{2\sqrt{n_l}}\right\} \\
& = L^{(l)}_{\phi} \, \Omega^{(l)}_{\mathcal{K}(\nabla f)}\left(\frac{1}{2\sqrt{n_l}}\right)  +\underset{\textbf{x}\in\phi([0,1]^m)}\sup\left\{\|\nabla f(\textbf{x})\|\right\} \, \frac{L^{(l)}_{\partial_l \phi}}{2\sqrt{n_l}} \\
& \leq L^{(l)}_{\phi} \, \Omega_{\nabla f}\left(\frac{L_\phi^{(l)}}{2\sqrt{n_l}}\right)  +\underset{\textbf{x}\in\phi([0,1]^m)}\sup\left\{\|\nabla f(\textbf{x})\|\right\} \, \frac{L^{(l)}_{\partial_l \phi}}{2\sqrt{n_l}}
\end{split}
\end{equation*}
with $\mathcal{K}(\nabla f)=\nabla f \circ \phi$ and where the last line follows from Lemma \ref{lem:modulus} (trivially adapted to a vector valued function). Finally, the result is obtained by combining the above inequality with \eqref{eq:intermediate_ineq}.
\end{proof}

\begin{example}
\label{ex:multi}
Consider the dynamics 
\begin{eqnarray*} 
\dot{x}_1&=&x_1(1+x_2)\\
\dot{x}_2&=& -x_2^2
\end{eqnarray*}
which generates a flow map
$$\phi_t(x_1,x_2)=\left(e^{t}x_1(tx_2+1),\frac{x_2}{1+tx_2}\right).$$
We compute the approximation error $\|\mathcal{B}_\mathbf{n} \mathcal{K}f-\mathcal{K}f\|_\infty$ for the flow map at $t=1$  and the function $f(x_1,x_2)=x_1^2 x_2^3$, for different numbers $n_1=n_2=n$ of Bernstein basis functions. This error is compared with the bounds estimated by using the full and partial modulus of continuity (Theorem \ref{theo_multi} and Theorem \ref{theo_multi_2}, respectively) along with Theorem \ref{theo_multi_3}. It is shown in Figure \ref{Error:multi_2} that, in this case, Theorem \ref{theo_multi_2} provides a better bound than Theorem \ref{theo_multi} only for large values $n$. Although it is more conservative for low values of $n$, the error bound obtained with Theorem \ref{theo_multi_3} is characterized by a better rate of convergence, thereby providing the best result for large values $n$. 
\begin{figure}[h!]
    \centering 
    \includegraphics[width=.5\textwidth]{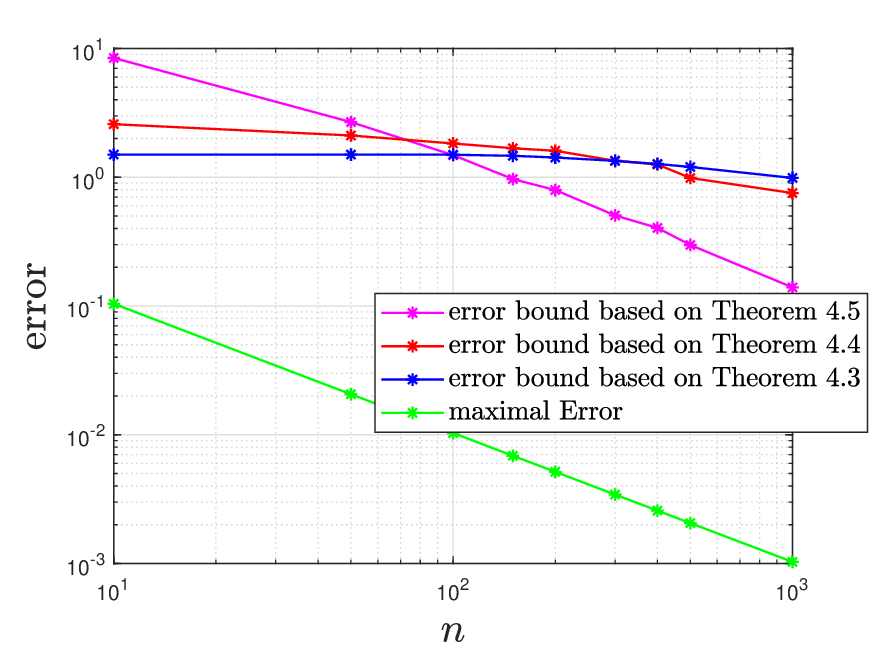}
    \caption[]{Error bounds computed with Theorem \ref{theo_multi} (full modulus of continuity), Theorem \ref{theo_multi_2} (partial modulus of continuity), and Theorem \ref{theo_multi_3} for the flow map considered in Example \ref{ex:multi}.}
    \label{Error:multi_2}
\end{figure}

\end{example}

\subsection{Error propagation under the iteration of the Koopman operator approximation}

In the previous subsections, we have computed error bounds for the Bernstein approximation of the Koopman operator. We will now consider the error propagation through the iteration of the Koopman operator approximation. We have the following general result.
    \begin{lemma} 
    \label{lem:iteration}
For $f\in C([0,1]^m)$ and $k\geq 1$, we have
 \begin{equation*}
        \|(\mathcal{B}_\mathbf{n}\mathcal{K})^kf-\mathcal{K}^kf\|_\infty \leq 
        \sum\limits_{j=0}^{k-1}\|(\mathcal{B}_\mathbf{n}\mathcal{K}(\mathcal{B}_\mathbf{n}\mathcal{K})^{j}f-\mathcal{K}(\mathcal{B}_\mathbf{n}\mathcal{K})^{j}f\|_\infty.
    \end{equation*}
\end{lemma}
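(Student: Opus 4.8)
The plan is to prove this by a straightforward telescoping argument, viewing the error $(\mathcal{B}_\mathbf{n}\mathcal{K})^k f - \mathcal{K}^k f$ as a sum of "one-step" discrepancies, each measuring how much a single extra application of $\mathcal{B}_\mathbf{n}\mathcal{K}$ differs from a single application of $\mathcal{K}$. First I would write the telescoping identity
\begin{equation*}
(\mathcal{B}_\mathbf{n}\mathcal{K})^k f - \mathcal{K}^k f = \sum_{j=0}^{k-1} \left( \mathcal{K}^{k-1-j}(\mathcal{B}_\mathbf{n}\mathcal{K})^{j+1} f - \mathcal{K}^{k-j}(\mathcal{B}_\mathbf{n}\mathcal{K})^{j} f \right),
\end{equation*}
in which every intermediate term cancels in pairs, leaving precisely $(\mathcal{B}_\mathbf{n}\mathcal{K})^k f - \mathcal{K}^k f$ (the $j=0$ term contributes $-\mathcal{K}^k f$ and the $j=k-1$ term contributes $(\mathcal{B}_\mathbf{n}\mathcal{K})^k f$). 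The $j$th summand can be rewritten as $\mathcal{K}^{k-1-j}\big( \mathcal{B}_\mathbf{n}\mathcal{K}(\mathcal{B}_\mathbf{n}\mathcal{K})^{j} f - \mathcal{K}(\mathcal{B}_\mathbf{n}\mathcal{K})^{j} f \big)$ by factoring out $\mathcal{K}^{k-1-j}$ on the left and noting $(\mathcal{B}_\mathbf{n}\mathcal{K})^{j+1} = (\mathcal{B}_\mathbf{n}\mathcal{K})\,(\mathcal{B}_\mathbf{n}\mathcal{K})^{j}$.

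Next I would apply the triangle inequality in the uniform norm to bound $\|(\mathcal{B}_\mathbf{n}\mathcal{K})^k f - \mathcal{K}^k f\|_\infty$ by the sum of the norms of the summands, and then use the fact that the Koopman operator $\mathcal{K}$ is a composition operator that does not increase the supremum norm: $\|\mathcal{K} g\|_\infty = \|g\circ\phi\|_\infty \leq \|g\|_\infty$ since $\phi$ maps $[0,1]^m$ into itself. Iterating this gives $\|\mathcal{K}^{k-1-j} h\|_\infty \leq \|h\|_\infty$ for any $h\in C([0,1]^m)$, so each summand is bounded by $\|\mathcal{B}_\mathbf{n}\mathcal{K}(\mathcal{B}_\mathbf{n}\mathcal{K})^{j} f - \mathcal{K}(\mathcal{B}_\mathbf{n}\mathcal{K})^{j} f\|_\infty$. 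Summing over $j$ from $0$ to $k-1$ yields exactly the claimed bound.

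There is no real obstacle here; the only things to be careful about are (i) getting the indices in the telescoping sum right so the cancellation is exact and the surviving terms are the correct ones, and (ii) justifying that $\mathcal{K}$ is a contraction (or at least non-expansive) on $C([0,1]^m)$ equipped with $\|\cdot\|_\infty$, which follows immediately from $\phi([0,1]^m)\subseteq[0,1]^m$ and the definition $\mathcal{K}g = g\circ\phi$. One could optionally remark that each intermediate iterate $(\mathcal{B}_\mathbf{n}\mathcal{K})^{j}f$ remains in $C([0,1]^m)$ (indeed in the polynomial subspace $\mathsf{F}_\mathbf{n}$ for $j\geq 1$), so that the Bernstein operator and the modulus-of-continuity error bounds from Theorems \ref{theo_multi}--\ref{theo_multi_3} can subsequently be applied to each term $\|\mathcal{B}_\mathbf{n}\mathcal{K} g - \mathcal{K} g\|_\infty$ with $g = (\mathcal{B}_\mathbf{n}\mathcal{K})^{j}f$; but that refinement belongs to the discussion following the lemma rather than to the proof of the inequality itself.
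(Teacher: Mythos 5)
Your proof is correct and is essentially the paper's argument in unrolled form: the paper inserts the same intermediate terms $\mathcal{K}^{k-j}(\mathcal{B}_\mathbf{n}\mathcal{K})^{j}f$ one step at a time and closes by induction, using exactly the same two ingredients (triangle inequality and $\|\mathcal{K}g\|_\infty\leq\|g\|_\infty$), whereas you write the telescoping sum explicitly. The difference is purely presentational, and your index bookkeeping and the factoring of $\mathcal{K}^{k-1-j}$ are correct.
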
  
\begin{proof}
Using the triangle inequality, we obtain
    \begin{eqnarray*}
\|(\mathcal{B}_\mathbf{n}\mathcal{K})^{j}f-\mathcal{K}^{j}f\|_\infty&\leq & \|(\mathcal{B}_\mathbf{n}\mathcal{K}(\mathcal{B}_\mathbf{n}\mathcal{K})^{j-1}f-\mathcal{K}(\mathcal{B}_\mathbf{n}\mathcal{K})^{j-1}f\|_\infty\\
&&+\|\mathcal{K}(\mathcal{B}_\mathbf{n}\mathcal{K})^{j-1}f-\mathcal{K}^jf\|_\infty\\
&=& \|(\mathcal{B}_\mathbf{n}\mathcal{K}(\mathcal{B}_\mathbf{n}\mathcal{K})^{j-1}f-\mathcal{K}(\mathcal{B}_\mathbf{n}\mathcal{K})^{j-1}f\|_\infty\\
&&+\|\mathcal{K}\left((\mathcal{B}_\mathbf{n}\mathcal{K})^{j-1}f-\mathcal{K}^{j-1}\right)f\|_\infty\\
&\leq & \|(\mathcal{B}_\mathbf{n}\mathcal{K}(\mathcal{B}_\mathbf{n}\mathcal{K})^{j-1}f-\mathcal{K}(\mathcal{B}_\mathbf{n}\mathcal{K})^{j-1}f\|_\infty\\
&&+\|(\mathcal{B}_\mathbf{n}\mathcal{K})^{j-1}f-\mathcal{K}^{j-1}f\|_\infty
\end{eqnarray*}
where we have used the fact that $\|\mathcal{K}f\|_\infty \leq \|f\|_\infty$. Then, the result follows by induction.
\end{proof}

Combining this lemma with our previous results, we obtain the following theorem.
    \begin{theorem} 
    \label{th:iteration}
Let $\phi:[0,1]^m \to [0,1]^m$ be a Lipschitz continuous map (with Lipschitz constants $L_\phi$ and $L_\phi^{(l)}$). For $f\in C[0,1]^m$ and $k\geq 1$, we have
 \begin{eqnarray*}\|(\mathcal{B}_\mathbf{n}\mathcal{K})^kf-\mathcal{K}^kf\|_\infty & \leq & \frac{3}{2} \sum\limits_{j=0}^{k-1} \Omega_{(\mathcal{B}_\mathbf{n}\mathcal{K})^jf}\left(L_\phi\sqrt{\sum_{l=1}^m \frac{1}{n_l}}\right)  \\
\|(\mathcal{B}_\mathbf{n}\mathcal{K})^kf-\mathcal{K}^kf\|_\infty & \leq & \frac{3}{2} \sum\limits_{j=0}^{k-1}  \sum\limits_{l=1}^m\Omega_{(\mathcal{B}_\mathbf{n}\mathcal{K})^jf}\left(\frac{L_\phi^{(l)}}{\sqrt{n_l}}\right).
 \end{eqnarray*}
Moreover, if the partial derivatives $\partial_{l} \phi$ are also Lipschitz continuous (with partial Lipschitz constant $L^{(l)}_{\partial_l \phi}$) for all $l$, then, for $f\in C^1([0,1]^m)$ and $k\geq 1$, we have
\begin{equation*}
\|(\mathcal{B}_\mathbf{n}\mathcal{K})^{k}f-\mathcal{K}^{k}f\|_\infty\leq \sum\limits_{j=0}^{k-1} \sum_{l=1}^m \frac{1}{\sqrt{n_l}} \left( L^{(l)}_{\phi} \,\Omega_{\nabla ((\mathcal{B}_\mathbf{n}\mathcal{K})^jf)}\left(\frac{L^{(l)}_{\phi}}{2\sqrt{n_l}}\right)     + \|\nabla((\mathcal{B}_\mathbf{n}\mathcal{K})^jf)\|_{\infty} \, \frac{L^{(l)}_{\partial_l\phi}}{2\sqrt{n_l}}  \right).
\end{equation*}
 \end{theorem}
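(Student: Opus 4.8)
The plan is to reduce the iterated error to a telescoping sum of single-step errors and then apply the bounds already established for the one-step Bernstein approximation of the Koopman operator. Write $g_j := (\mathcal{B}_\mathbf{n}\mathcal{K})^j f$ for $j = 0, \dots, k-1$. By Lemma \ref{lem:iteration},
\[
\|(\mathcal{B}_\mathbf{n}\mathcal{K})^k f - \mathcal{K}^k f\|_\infty \leq \sum_{j=0}^{k-1} \| \mathcal{B}_\mathbf{n}\mathcal{K} g_j - \mathcal{K} g_j \|_\infty,
\]
so it suffices to bound each summand, which is exactly the one-step approximation error for the observable $g_j$.

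First I would check the regularity of $g_j$. Since $\phi$ is continuous, $\mathcal{K}$ maps $C([0,1]^m)$ into itself, and $\mathcal{B}_\mathbf{n}$ sends any continuous function to a polynomial; hence by induction $g_j \in C([0,1]^m)$, which is all that Theorems \ref{theo_multi} and \ref{theo_multi_2} require. For the third (differentiable) bound, the Lipschitz continuity of $\phi$ and of $\partial_l \phi$ gives $\phi \in C^1$, so if $g_j \in C^1$ then $\mathcal{K} g_j = g_j \circ \phi \in C^1$ by the chain rule; as Bernstein polynomials are smooth, $g_{j+1} = \mathcal{B}_\mathbf{n}\mathcal{K} g_j \in C^1$, and induction (starting from $f \in C^1$) yields $g_j \in C^1$ for all $j$.

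Then I would apply Theorem \ref{theo_multi} (respectively Theorem \ref{theo_multi_2}, Theorem \ref{theo_multi_3}) with $g_j$ in place of $f$ to bound $\|\mathcal{B}_\mathbf{n}\mathcal{K} g_j - \mathcal{K} g_j\|_\infty$, and sum over $j = 0, \dots, k-1$. The only cosmetic adjustment is that those theorems state the bound in terms of $\Omega_{g_j}$ (respectively $\Omega_{\nabla g_j}$) restricted to $\phi([0,1]^m)$ and of $\sup_{\phi([0,1]^m)} \|\nabla g_j\|$; since $\phi([0,1]^m) \subseteq [0,1]^m$ and the modulus of continuity is monotone with respect to its domain, each such quantity is bounded by the corresponding one over $[0,1]^m$, which gives the displayed inequalities. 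The only genuinely substantive point — and hence the main obstacle — is verifying the propagation of $C^1$ regularity under $g_{j+1} = \mathcal{B}_\mathbf{n}\mathcal{K} g_j$, so that Theorem \ref{theo_multi_3} may be invoked at each step; everything else is direct substitution and summation.
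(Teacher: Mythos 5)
Your proof is correct and follows essentially the same route as the paper: the paper's own argument is exactly the telescoping bound of Lemma \ref{lem:iteration} followed by applying Theorems \ref{theo_multi}, \ref{theo_multi_2}, and \ref{theo_multi_3} to each iterate $(\mathcal{B}_\mathbf{n}\mathcal{K})^j f$. Your additional checks (propagation of continuity and $C^1$ regularity through the Bernstein operator, and dropping the restriction to $\phi([0,1]^m)$) are details the paper leaves implicit, and they are handled correctly.
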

    \begin{proof}
    The result directly follows from Theorems \ref{theo_multi}, \ref{theo_multi_2}, and \ref{theo_multi_3}, combined with Lemma \ref{lem:iteration}.
    \end{proof}
    The inequality in Theorem \ref{th:iteration} depends on the modulus of continuity of the functions $(\mathcal{B}_\mathbf{n}\mathcal{K})^{j}f$ and their gradients, which are known polynomials. For the first inequality, we derive an alternative bound that only depends on the modulus of continuity of $f$, at the cost of higher conservativeness (see Lemma \ref{lem:alternative_bound} in Appendix \ref{ap:alternative}).

\section{A first step towards the data-driven setting}

According to \eqref{pr}, the Bernstein approximation $(\mathcal{B}_\mathbf{n}\mathcal{K})f$ requires the knowledge of the values of the function $f$ at $\phi(k_1/n_1,\dots,k_m/n_m)$. However, in a data driven context, the values of the map $\phi$ might not be given on a regular lattice, but at randomly distributed points over a compact set $D$. In this section, we pave the way to extend the Bernstein approximation method to the data-driven setting, by assuming the existence of an appropriate change of variables.

\subsection{Bernstein approximation of the Koopman operator over a set of random points}
\label{sec:Bernstein_data}

Suppose we are given a set $X \subset D$ of $N$ randomly distributed points
\begin{equation*}
X = \{x_j \in D: j = 1,\dots,N \},
\end{equation*}
with $N=\Pi_{j=1}^m (n_j+1)$ for some $n_1,\dots,n_m \in \mathbb{N}$.
We assume that the set $X$ can be seen as the set of vertices of a lattice that is isotopic to a regular lattice with vertices
\begin{equation*}
\tilde{X} = \left\{\left(\frac{k_1}{n_1},\dots,\frac{k_m}{n_m}\right) : (k_1,\dots,k_m) \in \{0,\dots, n_1\} \times \cdots \times \{0,\dots, n_m\} \right\}.
\end{equation*}
In particular, this implies that there exists a bijective map $S:\tilde{X} \to X$ such that there is no intersection between the edges 
$$\left[S\left(\frac{k_1}{n_1},\dots,\frac{k_l}{n_l},\dots,\frac{k_m}{n_m}\right),S \left(\frac{k_1}{n_1},\dots,\frac{k_l+1}{n_l},\dots,\frac{k_m}{n_m}\right)\right]$$ 
for all $(k_1,\dots,k_m)$ and $l$. It should be noted that the existence of $S$ is not obvious. For instance, it is trivially related to necessary conditions on the number of data points. The construction of the map might also be computationally expensive. These questions are left for future work.

Next, the map $S:\tilde{X} \to X$ can be extended to $S:[0,1]^m \to D$ through interpolation (see e.g. linear interpolation in Appendix \ref{ap:construct_S}). For instance, in the univariate case, the map $S$ satisfies $S(k/n)=x_{k+1}$, where the points $x_k$ are ordered so that $x_k < x_{k+1}$, and linear interpolation yields $S(x)=x_{k+1}+n(x_{k+2}-x_{k+1})(x-k/n)$ for $x\in[k/n,(k+1)/n]$.

We are now in position to define a modified Bernstein operator based on the approximation in the new variables $\tilde{\textbf{x}}=S^{-1}(\textbf{x}) \in [0,1]^m$.
\begin{definition}
    Suppose that $S:[0,1]^m \to D$ is a continuous bijective map. For $f\in C(D)$ and $\tilde{f} = f \circ S \in C([0,1]^m)$, we define the $m$-variate modified Bernstein polynomial by
    $$\tilde{\mathcal{B}}_\mathbf{n}(f;\textbf{x}) = \mathcal{B}_\mathbf{n}(\tilde{f};\tilde{\textbf{x}}) = \mathcal{B}_\mathbf{n}(f \circ S;S^{-1}(\textbf{x})).$$
    Moreover, we denote by $\tilde{\mathcal{B}}_\mathbf{n}:C(D)\to C(D)$ the associated modified Bernstein operator.
\end{definition}
The modified Bernstein operator associated with the map $S$ leads to a Koopman operator approximation that is well-suited to the data-driven context. More precisely, we can define
\begin{equation*}
(\tilde{\mathcal{K}} f)(\textbf{x})=\tilde{\mathcal{B}}_\mathbf{n}(\mathcal{K}f;\textbf{x}) = \mathcal{B}_\mathbf{n}(\mathcal{K}f \circ S;S^{-1}(\textbf{x})) =\sum_{j=1}^{n_1\cdots n_m} f \circ \phi  \left(S\left(\frac{\alpha_1(j)}{n_1},\cdots, \frac{\alpha_m(j)}{n_m} \right)\right) B_j(S^{-1}(\textbf{x})), 
\end{equation*}
where we used the same notation as in Section \ref{sec:matrix}, i.e. the map $\alpha : \mathbb{N} \rightarrow \mathbb{N}^m$ refers
to the lexicographic order and $B_j$ are multivariate Bernstein polynomials. We verify that the values of the map $\phi$ are used only at the points $S(\alpha_1(j)/n_1,\cdots, \alpha_m(j)/n_m)\in X$. Considering the pairs of data points $\{x_j,\phi(x_j)\}_{j=1}^N= \{x_j,y_j\}_{j=1}^N$ and the permutation map $\pi$ defined over $\{1,\dots,N\}$ such that $S(\alpha_1(j)/n_1,\cdots, \alpha_m(j)/n_m)= x_{\pi(j)}$, we finally obtain
\begin{equation}
\label{eq:data_Bernstein_approx}
(\tilde{\mathcal{K}} f)(\textbf{x}) =\sum_{j=1}^{n_1\cdots n_m} f(y_{\pi(j)}) \, B_j(S^{-1}(\textbf{x})).
\end{equation}

\begin{remark}
    The approximation based on the modified Bernstein operator $\tilde{\mathcal{B}}_\mathbf{n}$ can also be used when the map $\phi$ is known on a regular grid of points, but defined over a more general interval $[a_1, b_1] \times \cdots \times [a_m, b_m] \neq [0,1]^m$. In this case, the map $S:[0,1]^m \to [a_1, b_1] \times \cdots \times [a_m, b_m]$ corresponds to an affine transformation.
\end{remark}

\subsection{Approximation error}

We now extend our previous analysis of the approximation error to the data-driven setting considered in this section, where we use the modified Bernstein operator. This can be done through the following lemma.
\begin{lemma}
\label{lem:data}
    Let $f \in C(D)$ and let $S:[0,1]^m \to D$ be a continuous bijective map. Then,
    \begin{equation*}
        \|\tilde{\mathcal{B}}_\mathbf{n}f-f\|_{\infty} = \|\mathcal{B}_\mathbf{n}(f\circ S)-f\circ S\|_{\infty}.
    \end{equation*}
\end{lemma}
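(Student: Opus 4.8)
The plan is to unwind both sides directly from the definition of the modified Bernstein operator $\tilde{\mathcal{B}}_\mathbf{n}$ and use the bijectivity of $S$ to rewrite the supremum over $D$ as a supremum over $[0,1]^m$. Writing $\tilde f = f\circ S\in C([0,1]^m)$, the definition gives $\tilde{\mathcal{B}}_\mathbf{n}(f;\mathbf{x}) = \mathcal{B}_\mathbf{n}(\tilde f;S^{-1}(\mathbf{x}))$ for every $\mathbf{x}\in D$, so that
\begin{equation*}
\tilde{\mathcal{B}}_\mathbf{n}f(\mathbf{x}) - f(\mathbf{x}) = \mathcal{B}_\mathbf{n}(\tilde f;S^{-1}(\mathbf{x})) - \tilde f(S^{-1}(\mathbf{x})) = \bigl(\mathcal{B}_\mathbf{n}\tilde f - \tilde f\bigr)(S^{-1}(\mathbf{x})).
\end{equation*}

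Next I would take absolute values and suprema. Since $S:[0,1]^m\to D$ is a continuous bijection, the map $\mathbf{x}\mapsto S^{-1}(\mathbf{x})$ is a bijection from $D$ onto $[0,1]^m$, so as $\mathbf{x}$ ranges over $D$ the argument $\tilde{\mathbf{x}}=S^{-1}(\mathbf{x})$ ranges over all of $[0,1]^m$. Hence
\begin{equation*}
\|\tilde{\mathcal{B}}_\mathbf{n}f - f\|_\infty = \sup_{\mathbf{x}\in D} \bigl|\bigl(\mathcal{B}_\mathbf{n}\tilde f - \tilde f\bigr)(S^{-1}(\mathbf{x}))\bigr| = \sup_{\tilde{\mathbf{x}}\in[0,1]^m} \bigl|\bigl(\mathcal{B}_\mathbf{n}\tilde f - \tilde f\bigr)(\tilde{\mathbf{x}})\bigr| = \|\mathcal{B}_\mathbf{n}\tilde f - \tilde f\|_\infty,
\end{equation*}
which is exactly $\|\mathcal{B}_\mathbf{n}(f\circ S) - f\circ S\|_\infty$, the claimed identity. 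One should note along the way that $\tilde f\in C([0,1]^m)$ — it is a composition of the continuous function $f$ with the continuous map $S$ — so $\mathcal{B}_\mathbf{n}\tilde f$ is well defined and the right-hand side makes sense.

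This is essentially a one-line change-of-variables argument, so there is no real obstacle; the only point requiring a modicum of care is the claim that $\mathbf{x}\mapsto S^{-1}(\mathbf{x})$ is a genuine bijection of $D$ onto $[0,1]^m$, which follows immediately from $S$ being bijective (continuity of $S^{-1}$ is not even needed for the supremum identity, only surjectivity and injectivity of $S$). It is worth remarking that the lemma reduces the data-driven error analysis to the standard Bernstein error bounds of Section~\ref{sec:bounds} applied to the transformed observable $f\circ S$, so the modulus of continuity of $f\circ S$ — controlled via the regularity of $f$ and of the interpolation map $S$ — governs the approximation quality.
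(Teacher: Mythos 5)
Your argument is correct and is essentially the paper's own proof: unwind the definition $\tilde{\mathcal{B}}_\mathbf{n}(f;\mathbf{x})=\mathcal{B}_\mathbf{n}(f\circ S;S^{-1}(\mathbf{x}))$ and use that $S^{-1}$ maps $D$ bijectively onto $[0,1]^m$, so the supremum over $D$ equals the supremum over $[0,1]^m$. You merely spell out the change-of-variables step that the paper leaves implicit, which is fine.
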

\begin{proof}
    We have
    \begin{equation*}
        \|\tilde{\mathcal{B}}_\mathbf{n}f-f\|_{\infty} = \|\mathcal{B}_\mathbf{n}(f\circ S;S^{-1}(\cdot))-f \circ S(S^{-1}(\cdot))\|_{\infty} = \|\mathcal{B}_\mathbf{n}(f\circ S)-f\circ S\|_{\infty}.
    \end{equation*}
\end{proof}
Combining with Theorem \ref{theo_multi} and Theorem \ref{theo_multi_2}, we have the following result.
\begin{corollary}
    Let $\phi:D \to D$ be a Lipschitz continuous map (with Lipschitz constant $L_\phi$) and $S:[0,1]^m \to D$ be a Lipschitz continuous bijective map (with Lipschitz constant $L_S$ and partial Lipschitz constant $L_S^{(l)}$). Then, for $f\in C(D)$, we have
\begin{equation*}
\|\tilde{\mathcal{B}}_\mathbf{n} \mathcal{K}f-\mathcal{K}f\|_\infty \leq\frac{3}{2} \Omega_f\left(L_{\phi} L_S \sqrt{\sum_{l=1}^m \frac{1}{n_l}}\right)
\end{equation*}
and
\begin{equation*}
\|\tilde{\mathcal{B}}_\mathbf{n} \mathcal{K}f-\mathcal{K}f\|_\infty \leq \frac{3}{2} \sum\limits_{l=1}^m \Omega_f\left(\frac{L_\phi L_S^{(l)}}{\sqrt{n_l}}\right).
\end{equation*}
\end{corollary}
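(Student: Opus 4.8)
The plan is to chain together Lemma~\ref{lem:data} (which converts the modified-Bernstein error into an ordinary-Bernstein error on the composed function $f\circ S$) with the two multivariate bounds of Theorem~\ref{theo_multi} and Theorem~\ref{theo_multi_2}, applied to the observable $\mathcal{K}f = f\circ\phi$. First I would apply Lemma~\ref{lem:data} with the observable $\mathcal{K}f$ in the role of ``$f$'', giving
\begin{equation*}
\|\tilde{\mathcal{B}}_\mathbf{n}\mathcal{K}f-\mathcal{K}f\|_\infty = \|\mathcal{B}_\mathbf{n}(\mathcal{K}f\circ S)-\mathcal{K}f\circ S\|_\infty .
\end{equation*}
Now observe that $\mathcal{K}f\circ S = f\circ\phi\circ S = \mathcal{K}_{\phi\circ S}f$, i.e. it is exactly the Koopman image of $f$ under the composed map $\phi\circ S:[0,1]^m\to D$. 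Since $S$ maps $[0,1]^m$ onto $D$ and $\phi:D\to D$, the composition $\phi\circ S$ is a map from $[0,1]^m$ into $D$; I would note that $D$ may differ from $[0,1]^m$, but the proof of Theorem~\ref{theo_multi} only uses that the target of the map is where the modulus of continuity of $f$ is evaluated, so the bound goes through with $\phi([0,1]^m)$ replaced by $(\phi\circ S)([0,1]^m)\subseteq D$.

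The second step is to bound the Lipschitz data of the composed map. By the chain rule for Lipschitz functions, $\phi\circ S$ is Lipschitz with constant $L_\phi L_S$, since
\begin{equation*}
\|\phi(S(\tilde{\mathbf{x}}))-\phi(S(\tilde{\mathbf{y}}))\| \leq L_\phi \|S(\tilde{\mathbf{x}})-S(\tilde{\mathbf{y}})\| \leq L_\phi L_S \|\tilde{\mathbf{x}}-\tilde{\mathbf{y}}\|.
\end{equation*}
For the partial Lipschitz constants, freezing all variables except $\tilde{x}_l$ and using the partial Lipschitz constant $L_S^{(l)}$ of $S$ together with the \emph{full} Lipschitz constant $L_\phi$ of $\phi$ gives that the $l$th partial Lipschitz constant of $\phi\circ S$ is at most $L_\phi L_S^{(l)}$. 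Then applying Theorem~\ref{theo_multi} to $\mathcal{K}_{\phi\circ S}f$ with Lipschitz constant $L_\phi L_S$ yields the first inequality, and applying Theorem~\ref{theo_multi_2} with partial Lipschitz constants $L_\phi L_S^{(l)}$ yields the second; in both cases one finally bounds the restricted modulus of continuity by the unrestricted one over $[0,1]^m$ (or $D$), matching the statement.

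The only genuinely delicate point is the bookkeeping about domains: Theorems~\ref{theo_multi} and~\ref{theo_multi_2} are stated for maps $[0,1]^m\to[0,1]^m$, whereas here the relevant map is $\phi\circ S:[0,1]^m\to D$ with $D$ a general compact set. I would either invoke the remark following the data-driven construction (which explicitly allows $D\neq[0,1]^m$ via an affine rescaling absorbed into $S$) or, more cleanly, note that the proofs of those theorems never use that the domain of the observable is the unit cube --- they only use the partition-of-unity and moment identities for the Bernstein basis on $[0,1]^m$ (the domain of $S^{-1}(\mathbf{x})$) and Lemma~\ref{lem:modulus}, which is itself domain-agnostic. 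Everything else is the routine chain-rule estimate above, so no further obstacle is expected.
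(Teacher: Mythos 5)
Your proof is correct and follows essentially the same route as the paper: apply Lemma~\ref{lem:data} to $\mathcal{K}f$, recognize $f\circ\phi\circ S$ as the Koopman image of $f$ under the composed map $\phi\circ S$ with Lipschitz constants $L_\phi L_S$ and $L_\phi L_S^{(l)}$, and invoke Theorems~\ref{theo_multi} and~\ref{theo_multi_2}. Your explicit handling of the domain bookkeeping ($\phi\circ S:[0,1]^m\to D$ versus the cube-to-cube setting of those theorems) is a welcome extra care that the paper leaves implicit.
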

\begin{proof}
    Using Lemma \ref{lem:data} with the continuous function $\mathcal{K}f$, we obtain
    \begin{equation*}
        \|\tilde{\mathcal{B}}_\mathbf{n} \mathcal{K}f-\mathcal{K}f\|_\infty = \|\mathcal{B}_\mathbf{n}(\mathcal{K}f\circ S)-\mathcal{K}f\circ S\|_{\infty} = \|\mathcal{B}_\mathbf{n}(f \circ \phi \circ S)- f \circ \phi \circ S\|_{\infty} = \|\mathcal{B}_\mathbf{n}\mathcal{K}_Sf- \mathcal{K}_Sf\|_{\infty},
    \end{equation*}
    with the Koopman operator $\mathcal{K}_Sf=f \circ \phi_S$ associated with the Lipschitz continuous map $\phi_S=\phi \circ S$ (with Lipschitz constant $L_\phi L_S$ and partial Lipschitz constant $L_\phi L_S^{(l)}$). Then, the result follows from Theorem \ref{theo_multi} and Theorem \ref{theo_multi_2} with $\mathcal{K}_S$ and $\phi_S$. 
\end{proof}
The above result provides a uniform bound on the Bernstein approximation of the Koopman operator that is valid on the whole state space $D$, while, in the present data-driven context, the flow is supposed to be known only at the data points.

Since $S$ maps the set $\tilde{X}$ onto $X$, it is clear that $L_S\geq L_S^{l} \geq n_l h$, where $h=\max_{x_i \in X} \min_{x_j \in X} \|x_i-x_j\|$ is the largest distance between neighboring data points. In particular, if the data points are distributed over $[0,1]^m$, i.e. $S:[0,1]^m \to [0,1]^m$, it follows that $h \geq 1/{n_l}$ so that $L_S \geq L_S^{(l)} \geq 1$ for all $l$. Moreover, the more unevenly distributed the data points are, the larger the Lipschitz constants are. This implies that the best approximation error bounds are obtained in the optimal case where the data points are uniformly distributed over a regular lattice. 

Finally, note that specific values of the Lipschitz constants can be computed when $S$ is defined through linear interpolation (see Appendix \ref{ap:construct_S}).

\subsection{Matrix approximation and comparison with EDMD}

Let us consider the subspace $\tilde{\mathsf{F}}_\mathbf{n}$ of polynomials of degree $\mathbf{n}$ in the variables $\tilde{\textbf{x}}=S^{-1}(\textbf{x})$. This subspace is spanned by the basis functions $B_j \circ S^{-1}$, where $B_j$ are the $m$-variate Bernstein basis polynomials. In this case, the matrix representation $\tilde{\mathbf{K}}_\mathcal{B}$ of the finite-dimensional Bernstein approximation $\tilde{\mathcal{K}}=\tilde{\mathcal{B}}_\mathbf{n}\mathcal{K}$ restricted to $\tilde{\mathsf{F}}_\mathbf{n}$ satisfies
\begin{equation*}
    \tilde{\mathcal{B}}_\mathbf{n}(\mathcal{K}(B\circ S^{-1});\textbf{x}) = \tilde{\mathcal{K}}(B\circ S^{-1})(\textbf{x}) = \tilde{\mathbf{K}}_\mathcal{B} \, B \circ S^{-1}(\textbf{x})
\end{equation*}
where we recall that $B$ is the vector of Bernstein basis polynomials. Following similar developments as in Section \ref{sec:matrix}, we have
\begin{equation*}
      [\tilde{\mathbf{K}}_\mathcal{B} B\circ S^{-1}(\textbf{x})]_i = \sum\limits_{j=1}^{n_1 \cdots n_m} B_i \circ S^{-1} \left(y_{\pi(j)}\right) B_j \circ S^{-1}(\textbf{x})
\end{equation*}
where we used \eqref{eq:data_Bernstein_approx}. It follows that the matrix approximation is given by
\begin{equation*}
    \tilde{\mathbf{K}}_\mathcal{B} = [B(S^{-1}(y_{\pi(1)})) \, B(S^{-1}(y_{\pi(2)})) \, \cdots \, B(S^{-1}(y_{\pi(N)}))],
\end{equation*}
or equivalently
\begin{equation*}
    \tilde{\mathbf{K}}_\mathcal{B} = \mathbf{C} \tilde{\mathbf{U}} \, 
\end{equation*}
with the data matrix
\begin{equation*}
    \tilde{\mathbf{U}} = [X(S^{-1}(y_{\pi(1)})) \, X(S^{-1}(y_{\pi(2)})) \, \cdots \, X(S^{-1}(y_{\pi(N)}))].
\end{equation*}
Note that the data points $x_j$ are implicitly used in the construction of the map $S$.
Moreover, we also have 
\begin{equation*}
    \tilde{\mathbf{K}}_\mathcal{B}^X = \tilde{\mathbf{U}} \mathbf{C}
\end{equation*}
in the basis $X(S^{-1}(\textbf{x}))=X(\tilde{\textbf{x}})$ of monomials in $\tilde{\textbf{x}}$.

The most popular data-driven method to approximate the Koopman operator is the so-called Extended Dynamic Mode Decomposition (EDMD) \cite{Williams}. For a given choice of basis functions (which we assume to be monomials here), the matrix approximation of the Koopman operator obtained with the EDMD method is given by
\begin{equation*}
\tilde{\mathbf{K}}_{\mathrm{EDMD}}=\mathbf{U_Y} \mathbf{U_X^+}
\end{equation*}
with the data matrices
\begin{equation*}
    \mathbf{U_X} = [X(x_1) \, X(x_2) \, \cdots \, X(x_N)], \qquad
    \mathbf{U_Y} = [X(y_1) \, X(y_2) \, \cdots \, X(y_N)] 
\end{equation*}
and where $\mathbf{U_X^+}$ denotes the Moore-Penrose pseudoinverse of $\mathbf{U_X}$. It is noticeable that the computation of the matrix pseudoinverse is not required for the Bernstein approximation. On the other hand, some complexity is added through the possibly intricate construction of the map $S$ and the computation of its inverse (through linear interpolation, which is typically less expensive than computing the pseudoinverse). Note also that, when $S$ is the identity (uniformly distributed data points over $[0,1]^m$), we have $\mathbf{U_Y}=\tilde{\mathbf{U}}=\mathbf{U}$. In this case, $\mathbf{\tilde{K}}_{\mathrm{EDMD}}=\mathbf{U_Y} \mathbf{U_X^+}$ and $\mathbf{K}_\mathcal{B}^X = \mathbf{U_Y} \mathbf{C}$ are both matrix approximations of the Koopman operator in the same basis of monomials.

In the following example, we compare the performance of the Bernstein approximation method and the EDMD method in a data-driven context.

\begin{example}
Consider the competitive Lotka-Volterra dynamics
\begin{eqnarray*}
\dot{x}_1&=&1.5 \, x_1(1-x_1)-x_1 \, x_2\\
\dot{x}_2&=&1.5 \, x_2(1-x_2)- x_1 \, x_2
\end{eqnarray*}
and its flow map $\phi$ generated at time $t=1$. The true trajectory starting from the initial condition $x_0=(0.4,0.3)$ is compared with estimated trajectories obtained by iterating the Bernstein matrix approximation $\tilde{\mathbf{K}}^X_\mathcal{B}$ and the EDMD matrix approximation $\tilde{\mathbf{K}}_{\mathrm{EDMD}}$. Both approximation matrices were computed with $N=256$ data points shown in Figure \ref{fig:Lotka_data}(a) and with the same subspace $\mathsf{F}_\mathbf{n}$ of polynomials of degree less or equal to $15$ (i.e. $(n_1,n_2)=(15,15)$). For the Bernstein approximation, the map $S$ was constructed with a linear interpolation based on Delaunay triangulation of the data points. While the Bernstein approximation yields an accurate prediction of the trajectory, the EDMD method produces a trajectory that rapidly diverges. Note that the error bounds are very conservative in this case due to the low values $n_1$ and $n_2$. Better bounds could be obtained with polynomials of higher degree (and more data points), but this would yield unstable Bernstein matrices and diverging trajectories in long-term prediction. Additionally, the values of the flow map were corrupted with Gaussian noise (with zero mean and component-wise standard deviation equal to $0.02$). The results show that the Bernstein approximation is little affected by noise measurement, in contrast to EDMD.

While this numerical example suggests that the Bernstein approximation yields better prediction results than the EDMD method used with a polynomial basis, it does not allow to draw a general conclusion, nor does it imply that EDMD might not have better performance with other sets of basis functions.

\begin{figure}[h]
     \centering  
		\subfigure[Irregular lattice of data points]{\includegraphics[width=0.45\textwidth]{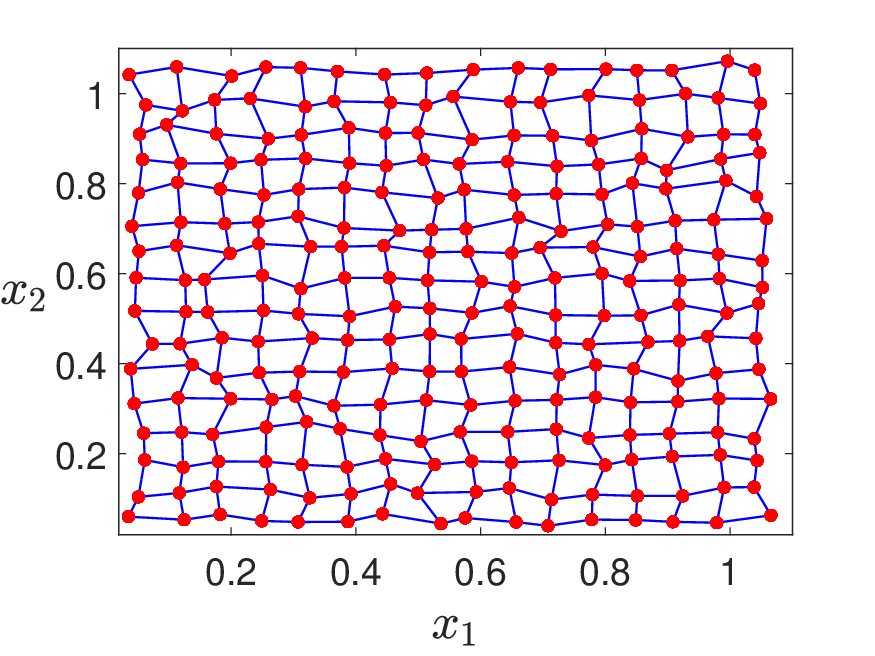}}
		\subfigure[Predicted trajectories]{\includegraphics[width=0.45\textwidth]{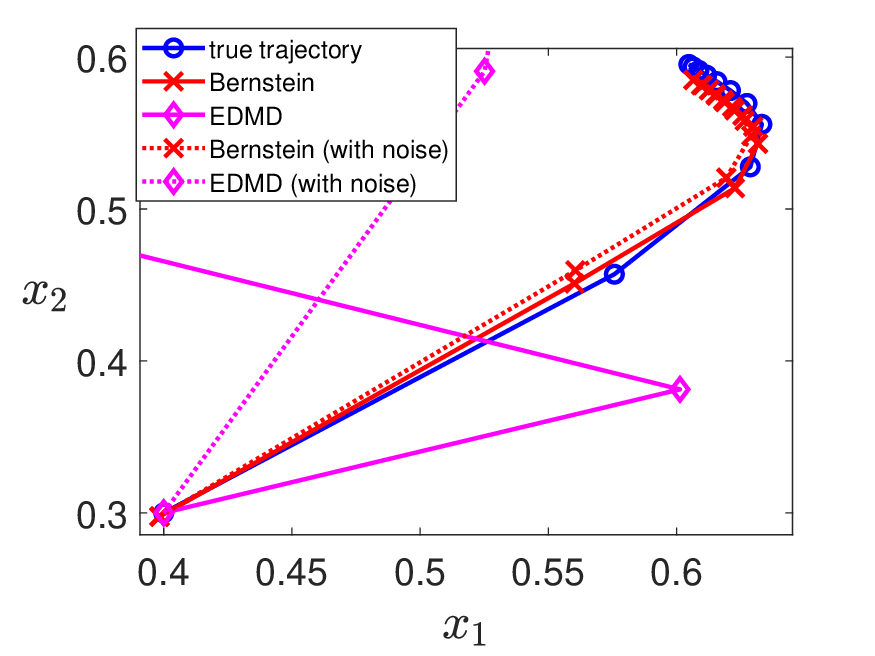}}
    \caption{For the competitive Lotka-Volterra system, Bernstein and EDMD approximations of the Koopman operator are computed from the values of the flow at the data points (panel (a)). Predicted trajectories are obtained by iterating the approximation matrices (panel (b)). The Bernstein approximation yields an accurate prediction, even in the case of measurement noise (dashed curves).}
    \label{fig:Lotka_data}
\end{figure}
\end{example}

\section{Conclusions and Perspectives}

In this paper, we have developed a novel finite-dimensional approximation scheme for the Koopman operator, which is based on Bernstein polynomial approximation. For several cases (univariate and multivariate maps), the method was complemented with an error analysis, providing convergence rates and approximation error bounds in the uniform sense which are inherited from the properties of Bernstein approximation. The errors bounds are expressed in terms of the modulus of continuity of the observables (and possibly of their derivative) and requires the sole knowledge of the Lipschitz constant of the map defining the Koopman operator. In addition, through an appropriate change of variables, the framework has been extended to a data-driven setting, where the flow is known at randomly distributed points, and it has been compared to the EDMD method in the context of prediction.

The present work opens several perspectives. First, the approximation error bounds obtained in this paper appear to be quite conservative. 
In this context, tighter bounds could be obtained by using the modulus of continuity of (higher-order) derivatives of the observables. Similarly, better convergence rates could be guaranteed through the use of iterated Bernstein polynomials (see e.g. \cite{Guan, Kelisky}).
In the same line, the use of other operators could be investigated, such as the Sz\'{a}sz–Mirakyan operator extending the Bernstein approximation to unbounded sets, or the Bernstein-Kantorovich operator extending the approximation to discontinuous integrable functions. Moreover, in the data-driven context, the method relies on a map which ``preserves the lattice structure'' from a regular lattice over $[0,1]^m $ to the set of data points. Both existence and algorithmic construction of such a map are left as an open problem. Finally, the efficiency and relevance of the proposed Bernstein approximation of the Koopman operator should be further investigated, for instance in the context of high-dimensional systems, spectral analysis of dynamical systems, and nonlinear control theory.

\appendix

\section{Alternative bound in Theorem \ref{th:iteration}}
\label{ap:alternative}
We have the following result, which provides an alternative bound in the first inequality of Theorem \ref{th:iteration}.
\begin{lemma}
\label{lem:alternative_bound}
    For $f \in C([0,1]^m)$, $k\geq 1$, and $\delta=\sqrt{\sum_{l=1}^m \frac{1}{n_l}}$, we have
    \begin{equation*}
         \sum\limits_{j=0}^{k-1} \Omega_{(\mathcal{B}_\mathbf{n}\mathcal{K})^jf}(L_\phi\delta) \leq \sum\limits_{l=1}^{k} 4^{k-l} \, \Omega_{f}(L_\phi^{l}\delta) .
    \end{equation*}
\end{lemma}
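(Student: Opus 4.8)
The plan is to reduce the statement to a one-dimensional linear recursion. Write $g_j = (\mathcal{B}_\mathbf{n}\mathcal{K})^j f$, so that $g_0 = f$ and $g_j = \mathcal{B}_\mathbf{n}(\mathcal{K} g_{j-1})$ for $j\ge 1$, and abbreviate $\delta = \sqrt{\sum_{l=1}^m 1/n_l}$. The first step is to control the modulus of continuity of one iterate by that of the previous one. From the proof of Theorem \ref{theo_multi} one has the pointwise bound $|\mathcal{B}_\mathbf{n}(h;\mathbf{x}) - h(\mathbf{x})| \le \frac{3}{2}\Omega_h(\delta)$ for every $h\in C([0,1]^m)$; inserting $\mathcal{B}_\mathbf{n}(h;\cdot)$ between $h(\mathbf{x})$ and $h(\mathbf{y})$ and using that $\Omega_h$ is nondecreasing gives $\Omega_{\mathcal{B}_\mathbf{n}h}(\eta) \le 3\Omega_h(\delta) + \Omega_h(\eta)$ for all $\eta>0$. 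Applying this to $h = \mathcal{K}g_{j-1} = g_{j-1}\circ\phi$ (which is continuous) and then Lemma \ref{lem:modulus} (bounding the restricted modulus over $\phi([0,1]^m)$ by the full modulus over $[0,1]^m$) yields, for $j\ge 1$,
\[
\Omega_{g_j}(\eta) \;\le\; 3\,\Omega_{g_{j-1}}(L_\phi\delta) + \Omega_{g_{j-1}}(L_\phi\eta).
\]

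Next I would iterate this estimate with $\eta = L_\phi^{\,r}\delta$. A straightforward induction on $j$ — each application of the recursion peels off a term $3\,\Omega_{g_{i-1}}(L_\phi\delta)$ and raises the exponent of $L_\phi$ in the remaining term by one, until the index reaches $g_0 = f$ — gives, for all $j\ge 0$ and $r\ge 1$,
\[
\Omega_{g_j}(L_\phi^{\,r}\delta) \;\le\; 3\sum_{i=0}^{j-1}\Omega_{g_i}(L_\phi\delta) + \Omega_f\!\left(L_\phi^{\,j+r}\delta\right),
\]
the sum being empty when $j=0$. Taking $r=1$ and setting $S_j = \sum_{i=0}^{j}\Omega_{g_i}(L_\phi\delta)$, the inequality $\Omega_{g_j}(L_\phi\delta)\le 3S_{j-1}+\Omega_f(L_\phi^{\,j+1}\delta)$ turns into the scalar recursion
\[
S_0 = \Omega_f(L_\phi\delta), \qquad S_j \le 4\,S_{j-1} + \Omega_f\!\left(L_\phi^{\,j+1}\delta\right) \quad (j\ge 1),
\]
whose solution, obtained by another short induction, is $S_{k-1} \le \sum_{l=1}^{k} 4^{\,k-l}\,\Omega_f(L_\phi^{\,l}\delta)$. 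Since $S_{k-1} = \sum_{j=0}^{k-1}\Omega_{(\mathcal{B}_\mathbf{n}\mathcal{K})^j f}(L_\phi\delta)$, this is exactly the claimed inequality.

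I do not anticipate a substantial obstacle; the argument is elementary once the recursion in the first display is in hand, and the work is essentially bookkeeping. The points to watch are keeping the exponent of $L_\phi$ synchronized with the decreasing index during the unrolling (it must end at $L_\phi^{\,j+r}$), and noting that the arguments $L_\phi^{\,r}\delta$ may exceed $\mathrm{diam}([0,1]^m)$, which is harmless since $\Omega_f$ is still well defined there (it merely saturates). The replacement of the restricted modulus $\Omega_f(\cdot)|_{\phi([0,1]^m)}$ from Lemma \ref{lem:modulus} by the full modulus is legitimate because the supremum over the smaller set is no larger, and it matches the form of the bound stated in the lemma.
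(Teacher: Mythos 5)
Your proof is correct and follows essentially the same route as the paper: the same insertion of the Bernstein approximant, yielding $\Omega_{(\mathcal{B}_\mathbf{n}\mathcal{K})^{j}f}(\eta)\le 3\,\Omega_{(\mathcal{B}_\mathbf{n}\mathcal{K})^{j-1}f}(L_\phi\delta)+\Omega_{(\mathcal{B}_\mathbf{n}\mathcal{K})^{j-1}f}(L_\phi\eta)$ via the pointwise bound from Theorem \ref{theo_multi} and Lemma \ref{lem:modulus}, followed by the same unrolling to $\Omega_f(L_\phi^{j+1}\delta)$. The only difference is bookkeeping: you solve the linear recursion $S_j\le 4S_{j-1}+\Omega_f(L_\phi^{j+1}\delta)$ for the partial sums directly, whereas the paper first establishes a per-iterate bound and then sums it with a geometric-series manipulation --- same content, slightly more streamlined on your side.
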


\begin{proof}
We first show that
       \begin{equation}\label{cor:ineq}
           \Omega_{(\mathcal{B}_\mathbf{n}\mathcal{K})^{j}f}(L_\phi \delta) \leq \Omega_{f}(L_\phi^{j+1}\delta) + 3 \sum_{l=1}^j 4^{j-l} \Omega_{f}(L_\phi^{l}\delta).
       \end{equation}
       Using the definition of the full modulus of continuity, we have
        \begin{eqnarray*}
          \Omega_{(\mathcal{B}_\mathbf{n}\mathcal{K})^{j}f}(L_\phi\delta) = \max\{|(\mathcal{B}_\mathbf{n}\mathcal{K})^{j}f(\textbf{x})-(\mathcal{B}_\mathbf{n}\mathcal{K})^{j}f(\textbf{y})|:\|\textbf{x}-\textbf{y}\|\leq L_\phi\delta\}
        \end{eqnarray*}
        and the triangle inequality yields
        \begin{equation*}
            \begin{split}
                |(\mathcal{B}_\mathbf{n}\mathcal{K})^{j}f(\textbf{x})-(\mathcal{B}_\mathbf{n}\mathcal{K})^{j}f(\textbf{y})|
                & \leq |\mathcal{B}_\mathbf{n}\mathcal{K}(\mathcal{B}_\mathbf{n}\mathcal{K})^{j-1}f(\textbf{x})-\mathcal{K}(\mathcal{B}_\mathbf{n}\mathcal{K})^{j-1}f(\textbf{x})|\\ 
                &\quad+|\mathcal{B}_\mathbf{n}\mathcal{K}(\mathcal{B}_\mathbf{n}\mathcal{K})^{j-1}f(\textbf{y})-\mathcal{K}(\mathcal{B}_\mathbf{n}\mathcal{K})^{j-1}f(\textbf{y})|\\
                &\quad+ |\mathcal{K}(\mathcal{B}_\mathbf{n}\mathcal{K})^{j-1}f(\textbf{x})-\mathcal{K}(\mathcal{B}_\mathbf{n}\mathcal{K})^{j-1}f(\textbf{y})|.
            \end{split}
        \end{equation*}
         Maximizing and using Theorem \ref{theo_multi} and Lemma \ref{lem:modulus}, we obtain
        \begin{eqnarray*}
            \Omega_{(\mathcal{B}_\mathbf{n}\mathcal{K})^{j}f}(L_\phi\delta) &\leq& \frac{3}{2}\Omega_{(\mathcal{B}_\mathbf{n}\mathcal{K})^{j-1}f}(L_{\phi}\delta)+\frac{3}{2}\Omega_{(\mathcal{B}_\mathbf{n}\mathcal{K})^{j-1}f}(L_{\phi}\delta)+\Omega_{\mathcal{K}(\mathcal{B}_\mathbf{n}\mathcal{K})^{j-1}f}(L_{\phi}\delta)\\
            &\leq& 3\Omega_{(\mathcal{B}_\mathbf{n}\mathcal{K})^{j-1}f}(L_{\phi}\delta) + \Omega_{(\mathcal{B}_\mathbf{n}\mathcal{K})^{j-1}f}(L_{\phi}^2\delta)
        \end{eqnarray*}
        and similarly
           \begin{equation*}
            \Omega_{(\mathcal{B}_\mathbf{n}\mathcal{K})^{j}f}(L_\phi^l\delta) \leq 3\Omega_{(\mathcal{B}_\mathbf{n}\mathcal{K})^{j-1}f}(L_{\phi}\delta) + \Omega_{(\mathcal{B}_\mathbf{n}\mathcal{K})^{j-1}f}(L_{\phi}^{l+1}\delta). 
        \end{equation*}
        It follows by recursion that
        \begin{equation}
        \label{eq:recursion}
            \Omega_{(\mathcal{B}_\mathbf{n}\mathcal{K})^{j}f}(L_\phi\delta) \leq \Omega_{f}(L_\phi^{j+1}\delta) + 3 \sum_{l=0}^{j-1} \Omega_{(\mathcal{B}_\mathbf{n}\mathcal{K})^{l}f}(L_\phi\delta).
        \end{equation}
        By comparing with \eqref{cor:ineq}, we see that it remains to show that 
        \begin{equation}
        \label{eq:inequal_recurs}
            \sum_{l=0}^{j-1} \Omega_{(\mathcal{B}_\mathbf{n}\mathcal{K})^{l}f}(L_\phi\delta) \leq \sum_{l=1}^j 4^{j-l} \Omega_{f}(L_\phi^{l}\delta).
        \end{equation}
        We proceed with a recursive argument. It is trivial that \eqref{eq:inequal_recurs} holds for $j=1$.  Now we suppose that it is true for $j=\bar{j}$. Using \eqref{eq:recursion} and \eqref{eq:inequal_recurs}, we have
        \begin{equation*}
        \begin{split}
            \sum_{l=0}^{\bar{j}} \Omega_{(\mathcal{B}_\mathbf{n}\mathcal{K})^{l}f}(L_\phi\delta) & \leq \sum_{l=0}^{\bar{j}-1} \Omega_{(\mathcal{B}_\mathbf{n}\mathcal{K})^{l}f}(L_\phi\delta) + \Omega_{(\mathcal{B}_\mathbf{n}\mathcal{K})^{\bar{j}}f}(L_\phi\delta) \\
            & \leq 4 \sum_{l=0}^{\bar{j}-1} \Omega_{(\mathcal{B}_\mathbf{n}\mathcal{K})^{l}f}(L_\phi\delta) + \Omega_{f}(L_\phi^{\bar{j}+1}\delta) \\
            & \leq 4 \sum_{l=1}^{\bar{j}} 4^{\bar{j}-l} \Omega_{f}(L_\phi^{l}\delta)
            + \Omega_{f}(L_\phi^{\bar{j}+1}\delta) \\
            & \leq \sum_{l=1}^{\bar{j}+1} 4^{\bar{j}+1-l} \Omega_{f}(L_\phi^{l}\delta)
            \end{split}
        \end{equation*}
    so that \eqref{eq:recursion} is valid for $\bar{j}+1$.
    Finally, using \eqref{cor:ineq}, we compute
    \begin{equation*}
    \begin{split}
        \sum\limits_{j=0}^{k-1} \Omega_{(\mathcal{B}_\mathbf{n}\mathcal{K})^jf}(L_\phi\delta) & \leq \sum\limits_{j=0}^{k-1} \Omega_{f}(L_\phi^{j+1}\delta) + 3 \sum\limits_{j=0}^{k-1}  \sum_{l=1}^j 4^{j-l} \Omega_{f}(L_\phi^{l}\delta) \\
        & = \sum\limits_{j=0}^{k-1} \Omega_{f}(L_\phi^{j+1}\delta) + 3 \sum\limits_{l=1}^{k-1} \Omega_{f}(L_\phi^{l}\delta) \sum_{j=l}^{k-1} 4^{j-l} \\
        & = \sum\limits_{l=1}^{k} \Omega_{f}(L_\phi^{l}\delta) +\sum\limits_{l=1}^{k-1} (4^{k-l}-1) \, \Omega_{f}(L_\phi^{l}\delta)  \\
        & = \sum\limits_{l=1}^{k} 4^{k-l} \, \Omega_{f}(L_\phi^{l}\delta) . 
        \end{split}
    \end{equation*}  
\end{proof}
Alternative bounds could also be derived for the other inequalities in Theorem \ref{th:iteration} by following similar lines. Since these bounds are conservative and might have a cumbersome expression, they are not given here.

\section{Construction of the map $S$}
\label{ap:construct_S}

We discuss the extension of the map $S:\tilde{X} \to X$ to $S:[0,1]^m \to D$ through linear interpolation, where we assume here that $D$ is the convex hull of $X$. 
Suppose that $\textbf{x}\in [0,1]^m$ lies in the $m$-simplex $C^{(j)}$ with vertices $v^{(j)}_0,\dots,v^{(j)}_{m} \in \tilde{X}$ and is characterized by the barycentric coordinates $(\beta_0,\dots,\beta_{m})\in [0,1]^{m+1}$ so that $x = \beta_0 v^{(j)}_0 + \cdots + \beta_m v^{(j)}_m$. In particular, we can compute
\begin{equation}
\label{eq:bary}
\begin{split}
    & \begin{pmatrix}
        \beta_1 \\ \vdots \\ \beta_m
    \end{pmatrix} = \begin{pmatrix}
    | & & | \\
        v^{(j)}_{1}-v^{(j)}_{0} & \cdots & v^{(j)}_{m}-v^{(j)}_{0} \\
        | & & |
    \end{pmatrix}^{-1}
        \left(\textbf{x}-v^{(j)}_{0}\right) \\
        & \quad \beta_0=1-\beta_1-\cdots-\beta_m.
        \end{split}
\end{equation}
Next, it follows from linear interpolation that 
\begin{equation*}
    S(\textbf{x}) = \beta_0 \, S(v^{(j)}_0) + \cdots + \beta_m \, S(v^{(j)}_m),
\end{equation*}
which can be rewritten, using \eqref{eq:bary}, as
\begin{equation}
\label{eq:map_S}
\begin{split}
S(\textbf{x}) & = S(v^{(j)}_0) + \begin{pmatrix}
| & & | \\
S(v^{(j)}_1)-S(v^{(j)}_0) & \cdots & S(v^{(j)}_m)-S(v^{(j)}_0) \\
| & & |
\end{pmatrix}
\begin{pmatrix}
    | & & | \\
        v^{(j)}_{1}-v^{(j)}_{0} & \cdots & v^{(j)}_{m}-v^{(j)}_{0} \\
        | & & |
    \end{pmatrix}^{-1}
        \left(\textbf{x}-v^{(j)}_{0}\right) \\
        & \triangleq S(v^{(j)}_0) + \mathbf{S}^{(j)}(\mathbf{V}^{(j)})^{-1} \left(\textbf{x}-v^{(j)}_{0}\right).
        \end{split}
\end{equation}
Note that $S(v_l^{(j)})\in X$ is well-defined for all $l \in \{0,\dots,m\}$  since $v_l^{(j)} \in \tilde{X}$. Moreover, if the vertices $v_l^{(j)}$ are adjacent points of the regular lattice (i.e. $(v_l^{(i)}-v_l^{(0)})^T(v_l^{(j)}-v_l^{(0)})=1/n_i \, \delta_{ij}$), we have
\begin{equation*}
 (\mathbf{V}^{(j)})^{-1} = \mathrm{diag}(n_1, \dots, n_m).
\end{equation*}
Proceeding along the same lines for all simplices $C^{(j)}$ such that $\cup_j C^{(j)} = [0,1]^m$, we finally obtain a piecewise-linear map $S$ over $[0,1]^m$.

Next, the Lipschitz constant of $S$ is computed as
\begin{equation*}
L_S = \max_{C^{(j)}} \left\| \mathbf{S}^{(j)}(\mathbf{V}^{(j)})^{-1} \right\|_2
\end{equation*}
where the maximum is taken over all simplices $C^{(j)}$ used in the interpolation process. Similarly, the partial Lipschitz constant is given by
\begin{equation*}
L_S^{(l)} = \max_{C^{(j)}} \left\| \mathbf{S}^{(j)}[(\mathbf{V}^{(j)})^{-1}]_{:,l} \right\|
\end{equation*}
where $[\cdot]_{:,l}$ denotes the $l$th column of a matrix.

\normalem
\printbibliography

\end{document}